  \let\fiverm\fivrm
\def\@picture(#1,#2)(#3,#4){%
  \@picht #2\unitlength
  \setbox\@picbox\hbox to #1\unitlength\bgroup 
  \let\endpicture=\!latexendpicture
  \let\frame=\!latexframe
  \let\linethickness=\!latexlinethickness
  \let\multiput=\!latexmultiput
  \let\put=\!latexput
  \hskip -#3\unitlength \lower #4\unitlength \hbox\bgroup}
\font\fiverm=cmr5
\def\PiC{P\kern-.12em\lower.5ex\hbox{I}\kern-.075emC}
\def\PiCTeX{\PiC\kern-.11em\TeX}
\def\!ifnextchar#1#2#3{%
  \let\!testchar=#1%
  \def\!first{#2}%
  \def\!second{#3}%
  \futurelet\!nextchar\!testnext}
\def\!testnext{%
  \ifx \!nextchar \!spacetoken 
    \let\!next=\!skipspacetestagain
  \else
    \ifx \!nextchar \!testchar
      \let\!next=\!first
    \else 
      \let\!next=\!second 
    \fi 
  \fi
  \!next}
\def\\{\!skipspacetestagain} 
\def\\ {\futurelet\!nextchar\!testnext} 
\def\\{\let\!spacetoken= } \\  
\def\!tfor#1:=#2\do#3{%
  \edef\!fortemp{#2}%
  \ifx\!fortemp\!empty 
    \else
    \!tforloop#2\!nil\!nil\!!#1{#3}%
  \fi}
\def\!tforloop#1#2\!!#3#4{%
  \def#3{#1}%
  \ifx #3\!nnil
    \let\!nextwhile=\!fornoop
  \else
    #4\relax
    \let\!nextwhile=\!tforloop
  \fi 
  \!nextwhile#2\!!#3{#4}}
\def\!etfor#1:=#2\do#3{%
  \def\!!tfor{\!tfor#1:=}%
  \edef\!!!tfor{#2}%
  \expandafter\!!tfor\!!!tfor\do{#3}}
\def\!cfor#1:=#2\do#3{%
  \edef\!fortemp{#2}%
  \ifx\!fortemp\!empty 
  \else
    \!cforloop#2,\!nil,\!nil\!!#1{#3}%
  \fi}
\def\!cforloop#1,#2\!!#3#4{%
  \def#3{#1}%
  \ifx #3\!nnil
    \let\!nextwhile=\!fornoop 
  \else
    #4\relax
    \let\!nextwhile=\!cforloop
  \fi
  \!nextwhile#2\!!#3{#4}}
\def\!ecfor#1:=#2\do#3{%
  \def\!!cfor{\!cfor#1:=}%
  \edef\!!!cfor{#2}%
  \expandafter\!!cfor\!!!cfor\do{#3}}
\def\!empty{}
\def\!nnil{\!nil}
\def\!fornoop#1\!!#2#3{}
\def\!ifempty#1#2#3{%
  \edef\!emptyarg{#1}%
  \ifx\!emptyarg\!empty
    #2%
  \else
    #3%
  \fi}
\def\!getnext#1\from#2{%
  \expandafter\!gnext#2\!#1#2}%
\def\!gnext\\#1#2\!#3#4{%
  \def#3{#1}%
  \def#4{#2\\{#1}}%
  \ignorespaces}
\def\!getnextvalueof#1\from#2{%
  \expandafter\!gnextv#2\!#1#2}%
\def\!gnextv\\#1#2\!#3#4{%
  #3=#1%
  \def#4{#2\\{#1}}%
  \ignorespaces}
\def\!copylist#1\to#2{%
  \expandafter\!!copylist#1\!#2}
\def\!!copylist#1\!#2{%
  \def#2{#1}\ignorespaces}
\def\!wlet#1=#2{%
  \let#1=#2 
  \wlog{\string#1=\string#2}}
\def\!listaddon#1#2{%
  \expandafter\!!listaddon#2\!{#1}#2}
\def\!!listaddon#1\!#2#3{%
  \def#3{#1\\#2}}
\def\!rightappend#1\withCS#2\to#3{\expandafter\!!rightappend#3\!#2{#1}#3}
\def\!!rightappend#1\!#2#3#4{\def#4{#1#2{#3}}}
\def\!leftappend#1\withCS#2\to#3{\expandafter\!!leftappend#3\!#2{#1}#3}
\def\!!leftappend#1\!#2#3#4{\def#4{#2{#3}#1}}
\def\!lop#1\to#2{\expandafter\!!lop#1\!#1#2}
\def\!!lop\\#1#2\!#3#4{\def#4{#1}\def#3{#2}}
\def\!loop#1\repeat{\def\!body{#1}\!iterate}
\def\!iterate{\!body\let\!next=\!iterate\else\let\!next=\relax\fi\!next}
\def\!!loop#1\repeat{\def\!!body{#1}\!!iterate}
\def\!!iterate{\!!body\let\!!next=\!!iterate\else\let\!!next=\relax\fi\!!next}
\def\!removept#1#2{\edef#2{\expandafter\!!removePT\the#1}}
{\catcode`p=12 \catcode`t=12 \gdef\!!removePT#1pt{#1}}
\def\placevalueinpts of <#1> in #2 {%
  \!removept{#1}{#2}}
\def\!mlap#1{\hbox to 0pt{\hss#1\hss}}
\def\!vmlap#1{\vbox to 0pt{\vss#1\vss}}
\def\!not#1{%
  #1\relax
    \!switchfalse
  \else
    \!switchtrue
  \fi
  \if!switch
  \ignorespaces}
\def\wlog#1{}    
\newdimen\headingtoplotskip     
\newdimen\linethickness         
\newdimen\longticklength        
\newdimen\plotsymbolspacing     
\newdimen\shortticklength       
\newdimen\stackleading          
\newdimen\tickstovaluesleading  
\newdimen\totalarclength        
\newdimen\valuestolabelleading  
\newbox\!boxA                   
\newbox\!boxB                   
\newbox\!picbox                 
\newbox\!plotsymbol             
\newbox\!putobject              
\newbox\!shadesymbol            
\newdimen\!Xleft                
\newdimen\!Xright               
\newdimen\!Xsave                
\newdimen\!Ybot                 
\newdimen\!Ysave                
\newdimen\!Ytop                 
\newdimen\!angle                
\newdimen\!arclength            
\newdimen\!areabloc             
\newdimen\!arealloc             
\newdimen\!arearloc             
\newdimen\!areatloc             
\newdimen\!bshrinkage           
\newdimen\!checkbot             
\newdimen\!checkleft            
\newdimen\!checkright           
\newdimen\!checktop             
\newdimen\!dimenA               
\newdimen\!dimenB               
\newdimen\!dimenC               
\newdimen\!dimenD               
\newdimen\!dimenE               
\newdimen\!dimenF               
\newdimen\!dimenG               
\newdimen\!dimenH               
\newdimen\!dimenI               
\newdimen\!distacross           
\newdimen\!downlength           
\newdimen\!dp                   
\newdimen\!dshade               
\newdimen\!dxpos                
\newdimen\!dxprime              
\newdimen\!dypos                
\newdimen\!dyprime              
\newdimen\!ht                   
\newdimen\!leaderlength         
\newdimen\!lshrinkage           
\newdimen\!midarclength         
\newdimen\!offset               
\newdimen\!plotheadingoffset    
\newdimen\!plotsymbolxshift     
\newdimen\!plotsymbolyshift     
\newdimen\!plotxorigin          
\newdimen\!plotyorigin          
\newdimen\!rootten              
\newdimen\!rshrinkage           
\newdimen\!shadesymbolxshift    
\newdimen\!shadesymbolyshift    
\newdimen\!tenAa                
\newdimen\!tenAc                
\newdimen\!tenAe                
\newdimen\!tshrinkage           
\newdimen\!uplength             
\newdimen\!wd                   
\newdimen\!wmax                 
\newdimen\!wmin                 
\newdimen\!xB                   
\newdimen\!xC                   
\newdimen\!xE                   
\newdimen\!xM                   
\newdimen\!xS                   
\newdimen\!xaxislength          
\newdimen\!xdiff                
\newdimen\!xleft                
\newdimen\!xloc                 
\newdimen\!xorigin              
\newdimen\!xpivot               
\newdimen\!xpos                 
\newdimen\!xprime               
\newdimen\!xright               
\newdimen\!xshade               
\newdimen\!xshift               
\newdimen\!xtemp                
\newdimen\!xunit                
\newdimen\!xxE                  
\newdimen\!xxM                  
\newdimen\!xxS                  
\newdimen\!xxloc                
\newdimen\!yB                   
\newdimen\!yC                   
\newdimen\!yE                   
\newdimen\!yM                   
\newdimen\!yS                   
\newdimen\!yaxislength          
\newdimen\!ybot                 
\newdimen\!ydiff                
\newdimen\!yloc                 
\newdimen\!yorigin              
\newdimen\!ypivot               
\newdimen\!ypos                 
\newdimen\!yprime               
\newdimen\!yshade               
\newdimen\!yshift               
\newdimen\!ytemp                
\newdimen\!ytop                 
\newdimen\!yunit                
\newdimen\!yyE                  
\newdimen\!yyM                  
\newdimen\!yyS                  
\newdimen\!yyloc                
\newdimen\!zpt                  
\newif\if!axisvisible           
\newif\if!gridlinestoo          
\newif\if!keepPO                
\newif\if!placeaxislabel        
\newif\if!switch                
\newif\if!xswitch               
\newtoks\!axisLaBeL             
\newtoks\!keywordtoks           
\newwrite\!replotfile           
\def\!cosrotationangle{1}      
\def\!sinrotationangle{0}      
\def\!xpivotcoord{0}           
\def\!xref{0}                  
\def\!xshadesave{0}            
\def\!ypivotcoord{0}           
\def\!yref{0}                  
\def\!yshadesave{0}            
\def\!zero{0}                  
\let\wlog=\!!!wlog
\def\normalgraphs{%
  \longticklength=.4\baselineskip
  \shortticklength=.25\baselineskip
  \tickstovaluesleading=.25\baselineskip
  \valuestolabelleading=.8\baselineskip
  \linethickness=.4pt
  \stackleading=.17\baselineskip
  \headingtoplotskip=1.5\baselineskip
  \visibleaxes
  \ticksout
  \nogridlines
  \unloggedticks}
\def\setplotarea x from #1 to #2, y from #3 to #4 {%
  \!arealloc=\!M{#1}\!xunit \advance \!arealloc -\!xorigin
  \!areabloc=\!M{#3}\!yunit \advance \!areabloc -\!yorigin
  \!arearloc=\!M{#2}\!xunit \advance \!arearloc -\!xorigin
  \!areatloc=\!M{#4}\!yunit \advance \!areatloc -\!yorigin
  \!initinboundscheck
  \!xaxislength=\!arearloc  \advance\!xaxislength -\!arealloc
  \!yaxislength=\!areatloc  \advance\!yaxislength -\!areabloc
  \!plotheadingoffset=\!zpt
  \!dimenput {{\setbox0=\hbox{}\wd0=\!xaxislength\ht0=\!yaxislength\box0}}
     [bl] (\!arealloc,\!areabloc)}
\def\visibleaxes{%
  \def\!axisvisibility{\!axisvisibletrue}}
\def\!fixkeyword#1{%
  \errhelp=\!keywordhelp
  \errmessage{Unrecognized keyword `#1': \the\!keywordtoks{NEW KEYWORD}'}}
\def\fixkeyword#1{%
  \!nextkeyword#1 }
\def\axis {%
  \def\!nextkeyword##1 {%
    \expandafter\ifx\csname !axis##1\endcsname \relax
      \def\!next{\!fixkeyword{##1}}%
    \else
      \def\!next{\csname !axis##1\endcsname}%
    \fi
    \!next}%
  \!offset=\!zpt
  \!axisvisibility
  \!placeaxislabelfalse
  \!nextkeyword}
\def\!axisbottom{%
  \!axisylevel=\!areabloc
  \def\!tickxsign{0}%
  \def\!tickysign{-}%
  \def\!axissetup{\!axisxsetup}%
  \def\!axislabeltbrl{t}%
  \!nextkeyword}
\def\!axistop{%
  \!axisylevel=\!areatloc
  \def\!tickxsign{0}%
  \def\!tickysign{+}%
  \def\!axissetup{\!axisxsetup}%
  \def\!axislabeltbrl{b}%
  \!nextkeyword}
\def\!axisleft{%
  \!axisxlevel=\!arealloc
  \def\!tickxsign{-}%
  \def\!tickysign{0}%
  \def\!axissetup{\!axisysetup}%
  \def\!axislabeltbrl{r}%
  \!nextkeyword}
\def\!axisright{%
  \!axisxlevel=\!arearloc
  \def\!tickxsign{+}%
  \def\!tickysign{0}%
  \def\!axissetup{\!axisysetup}%
  \def\!axislabeltbrl{l}%
  \!nextkeyword}
\def\!axisshiftedto#1=#2 {%
  \if 0\!tickxsign
    \!axisylevel=\!M{#2}\!yunit
    \advance\!axisylevel -\!yorigin
  \else
    \!axisxlevel=\!M{#2}\!xunit
    \advance\!axisxlevel -\!xorigin
  \fi
  \!nextkeyword}
\def\!axisvisible{%
  \!axisvisibletrue  
  \!nextkeyword}
\def\!axisinvisible{%
  \!axisvisiblefalse
  \!nextkeyword}
\def\!axislabel#1 {%
  \!axisLaBeL={#1}%
  \!placeaxislabeltrue
  \!nextkeyword}
\def\csname !axis/\endcsname{%
  \!axissetup 
  \if!placeaxislabel
    \!placeaxislabel
  \fi
  \if +\!tickysign 
    \!dimenA=\!axisylevel
    \advance\!dimenA \!offset 
    \advance\!dimenA -\!areatloc 
    \ifdim \!dimenA>\!plotheadingoffset
      \!plotheadingoffset=\!dimenA 
    \fi
  \fi}
\def\grid #1 #2 {%
  \!countA=#1\advance\!countA 1
  \axis bottom invisible ticks length <\!zpt> andacross quantity {\!countA} /
  \!countA=#2\advance\!countA 1
  \axis left   invisible ticks length <\!zpt> andacross quantity {\!countA} / }
\def\plotheading#1 {%
  \advance\!plotheadingoffset \headingtoplotskip
  \!dimenput {#1} [B] <.5\!xaxislength,\!plotheadingoffset>
    (\!arealloc,\!areatloc)}
\def\!axisxsetup{%
  \!axisxlevel=\!arealloc
  \!axisstart=\!arealloc
  \!axisend=\!arearloc
  \!axisLength=\!xaxislength
  \!!origin=\!xorigin
  \!!unit=\!xunit
  \!xswitchtrue
  \if!axisvisible 
    \!makeaxis
  \fi}
\def\!axisysetup{%
  \!axisylevel=\!areabloc
  \!axisstart=\!areabloc
  \!axisend=\!areatloc
  \!axisLength=\!yaxislength
  \!!origin=\!yorigin
  \!!unit=\!yunit
  \!xswitchfalse
  \if!axisvisible
    \!makeaxis
  \fi}
\def\!makeaxis{%
  \setbox\!boxA=\hbox{
    \beginpicture
      \!setdimenmode
      \setcoordinatesystem point at {\!zpt} {\!zpt}   
      \putrule from {\!zpt} {\!zpt} to
        {\!tickysign\!tickysign\!axisLength} 
        {\!tickxsign\!tickxsign\!axisLength}
    \endpicturesave <\!Xsave,\!Ysave>}%
    \wd\!boxA=\!zpt
    \!placetick\!axisstart}
\def\!placeaxislabel{%
  \advance\!offset \valuestolabelleading
  \if!xswitch
    \!dimenput {\the\!axisLaBeL} [\!axislabeltbrl]
      <.5\!axisLength,\!tickysign\!offset> (\!axisxlevel,\!axisylevel)
    \advance\!offset \!dp  
    \advance\!offset \!ht  
  \else
    \!dimenput {\the\!axisLaBeL} [\!axislabeltbrl]
      <\!tickxsign\!offset,.5\!axisLength> (\!axisxlevel,\!axisylevel)
  \fi
  \!axisLaBeL={}}
\def\arrow <#1> [#2,#3]{%
  \!ifnextchar<{\!arrow{#1}{#2}{#3}}{\!arrow{#1}{#2}{#3}<\!zpt,\!zpt> }}
\def\!arrow#1#2#3<#4,#5> from #6 #7 to #8 #9 {%
%
  \!xloc=\!M{#8}\!xunit   
  \!yloc=\!M{#9}\!yunit
  \!dxpos=\!xloc  \!dimenA=\!M{#6}\!xunit  \advance \!dxpos -\!dimenA
  \!dypos=\!yloc  \!dimenA=\!M{#7}\!yunit  \advance \!dypos -\!dimenA
  \let\!MAH=\!M
  \!setdimenmode
  \!xshift=#4\relax  \!yshift=#5\relax
  \!reverserotateonly\!xshift\!yshift
  \advance\!xshift\!xloc  \advance\!yshift\!yloc
%
  \!xS=-\!dxpos  \advance\!xS\!xshift
  \!yS=-\!dypos  \advance\!yS\!yshift
  \!start (\!xS,\!yS)
  \!ljoin (\!xshift,\!yshift)
%
  \!Pythag\!dxpos\!dypos\!arclength
  \!divide\!dxpos\!arclength\!dxpos  
  \!dxpos=32\!dxpos  \!removept\!dxpos\!!cos
  \!divide\!dypos\!arclength\!dypos  
  \!dypos=32\!dypos  \!removept\!dypos\!!sin
%
  \!halfhead{#1}{#2}{#3}
  \!halfhead{#1}{-#2}{-#3}
  \let\!M=\!MAH
  \ignorespaces}
  \def\!halfhead#1#2#3{%
    \!dimenC=-#1%
    \divide \!dimenC 2 
    \!dimenD=#2\!dimenC
    \!rotate(\!dimenC,\!dimenD)by(\!!cos,\!!sin)to(\!xM,\!yM)
    \!dimenC=-#1
    \!dimenD=#3\!dimenC
    \!dimenD=.5\!dimenD
    \!rotate(\!dimenC,\!dimenD)by(\!!cos,\!!sin)to(\!xE,\!yE)
    \!start (\!xshift,\!yshift)
    \advance\!xM\!xshift  \advance\!yM\!yshift
    \advance\!xE\!xshift  \advance\!yE\!yshift
    \!qjoin (\!xM,\!yM) (\!xE,\!yE) 
    \ignorespaces}
\def\betweenarrows #1#2 from #3 #4 to #5 #6 {%
  \!xloc=\!M{#3}\!xunit  \!xxloc=\!M{#5}\!xunit%
  \!yloc=\!M{#4}\!yunit  \!yyloc=\!M{#6}\!yunit%
  \!dxpos=\!xxloc  \advance\!dxpos by -\!xloc
  \!dypos=\!yyloc  \advance\!dypos by -\!yloc
  \advance\!xloc .5\!dxpos
  \advance\!yloc .5\!dypos
  \let\!MBA=\!M
  \!setdimenmode
  \ifdim\!dypos=\!zpt
    \ifdim\!dxpos<\!zpt \!dxpos=-\!dxpos \fi
    \put {\!lrarrows{\!dxpos}{#1}}#2{} at {\!xloc} {\!yloc}
  \else
    \ifdim\!dxpos=\!zpt
      \ifdim\!dypos<\!zpt \!dypos=-\!zpt \fi
      \put {\!udarrows{\!dypos}{#1}}#2{} at {\!xloc} {\!yloc}
    \fi
  \fi
  \let\!M=\!MBA
  \ignorespaces}
\def\!lrarrows#1#2{
  {\setbox\!boxA=\hbox{$\mkern-2mu\mathord-\mkern-2mu$}%
   \setbox\!boxB=\hbox{$\leftarrow$}\!dimenE=\ht\!boxB
   \setbox\!boxB=\hbox{}\ht\!boxB=2\!dimenE
   \hbox to #1{$\mathord\leftarrow\mkern-6mu
     \cleaders\copy\!boxA\hfil
     \mkern-6mu\mathord-$%
     \kern.4em $\vcenter{\box\!boxB}$$\vcenter{\hbox{#2}}$\kern.4em
     $\mathord-\mkern-6mu
     \cleaders\copy\!boxA\hfil
     \mkern-6mu\mathord\rightarrow$}}}
\def\!udarrows#1#2{
  {\setbox\!boxB=\hbox{#2}%
   \setbox\!boxA=\hbox to \wd\!boxB{\hss$\vert$\hss}%
   \!dimenE=\ht\!boxA \advance\!dimenE \dp\!boxA \divide\!dimenE 2
   \vbox to #1{\offinterlineskip
      \vskip .05556\!dimenE
      \hbox to \wd\!boxB{\hss$\mkern.4mu\uparrow$\hss}\vskip-\!dimenE
      \cleaders\copy\!boxA\vfil
      \vskip-\!dimenE\copy\!boxA
      \vskip\!dimenE\copy\!boxB\vskip.4em
      \copy\!boxA\vskip-\!dimenE
      \cleaders\copy\!boxA\vfil
      \vskip-\!dimenE \hbox to \wd\!boxB{\hss$\mkern.4mu\downarrow$\hss}
      \vskip .05556\!dimenE}}}
\def\putbar#1breadth <#2> from #3 #4 to #5 #6 {%
  \!xloc=\!M{#3}\!xunit  \!xxloc=\!M{#5}\!xunit%
  \!yloc=\!M{#4}\!yunit  \!yyloc=\!M{#6}\!yunit%
  \!dypos=\!yyloc  \advance\!dypos by -\!yloc
  \!dimenI=#2  
  \ifdim \!dimenI=\!zpt 
    \putrule#1from {#3} {#4} to {#5} {#6} 
  \else 
    \let\!MBar=\!M
    \!setdimenmode 
    \divide\!dimenI 2
    \ifdim \!dypos=\!zpt             
      \advance \!yloc -\!dimenI 
      \advance \!yyloc \!dimenI
    \else
      \advance \!xloc -\!dimenI 
      \advance \!xxloc \!dimenI
    \fi
    \putrectangle#1corners at {\!xloc} {\!yloc} and {\!xxloc} {\!yyloc}
    \let\!M=\!MBar 
  \fi
  \ignorespaces}
\def\setbars#1breadth <#2> baseline at #3 = #4 {%
  \edef\!barshift{#1}%
  \edef\!barbreadth{#2}%
  \edef\!barorientation{#3}%
  \edef\!barbaseline{#4}%
  \def\!bardobaselabel{\!bardoendlabel}%
  \def\!bardoendlabel{\!barfinish}%
  \let\!drawcurve=\!barcurve
  \!setbars}
\def\!setbars{%
  \futurelet\!nextchar\!!setbars}
\def\!!setbars{%
  \if b\!nextchar
    \def\!!!setbars{\!setbarsbget}%
  \else 
    \if e\!nextchar
      \def\!!!setbars{\!setbarseget}%
    \else
      \def\!!!setbars{\relax}%
    \fi
  \fi
  \!!!setbars}
\def\!setbarsbget baselabels (#1) {%
  \def\!barbaselabelorientation{#1}%
  \def\!bardobaselabel{\!!bardobaselabel}%
  \!setbars}
\def\!setbarseget endlabels (#1) {%
  \edef\!barendlabelorientation{#1}%
  \def\!bardoendlabel{\!!bardoendlabel}%
  \!setbars}
\def\!barcurve #1 #2 {%
  \if y\!barorientation
    \def\!basexarg{#1}%
    \def\!baseyarg{\!barbaseline}%
  \else
    \def\!basexarg{\!barbaseline}%
    \def\!baseyarg{#2}%
  \fi
  \expandafter\putbar\!barshift breadth <\!barbreadth> from {\!basexarg}
    {\!baseyarg} to {#1} {#2}
  \def\!endxarg{#1}%
  \def\!endyarg{#2}%
  \!bardobaselabel}
\def\!!bardobaselabel "#1" {%
  \put {#1}\!barbaselabelorientation{} at {\!basexarg} {\!baseyarg}
  \!bardoendlabel}
\def\!!bardoendlabel "#1" {%
  \put {#1}\!barendlabelorientation{} at {\!endxarg} {\!endyarg}
  \!barfinish}
\def\!barfinish{%
  \!ifnextchar/{\!finish}{\!barcurve}}
\def\putrectangle{%
  \!ifnextchar<{\!putrectangle}{\!putrectangle<\!zpt,\!zpt> }}
\def\!putrectangle<#1,#2> corners at #3 #4 and #5 #6 {%
%
  \!xone=\!M{#3}\!xunit  \!xtwo=\!M{#5}\!xunit%
  \!yone=\!M{#4}\!yunit  \!ytwo=\!M{#6}\!yunit%
  \ifdim \!xtwo<\!xone
    \!dimenI=\!xone  \!xone=\!xtwo  \!xtwo=\!dimenI
  \fi
  \ifdim \!ytwo<\!yone
    \!dimenI=\!yone  \!yone=\!ytwo  \!ytwo=\!dimenI
  \fi
  \!dimenI=#1\relax  \advance\!xone\!dimenI  \advance\!xtwo\!dimenI
  \!dimenI=#2\relax  \advance\!yone\!dimenI  \advance\!ytwo\!dimenI
  \let\!MRect=\!M
  \!setdimenmode
%
  \!shaderectangle
%
  \!dimenI=.5\linethickness
  \advance \!xone  -\!dimenI
  \advance \!xtwo   \!dimenI
  \putrule from {\!xone} {\!yone} to {\!xtwo} {\!yone} 
  \putrule from {\!xone} {\!ytwo} to {\!xtwo} {\!ytwo} 
%
  \advance \!xone   \!dimenI
  \advance \!xtwo  -\!dimenI%
  \advance \!yone  -\!dimenI
  \advance \!ytwo   \!dimenI
  \putrule from {\!xone} {\!yone} to {\!xone} {\!ytwo} 
  \putrule from {\!xtwo} {\!yone} to {\!xtwo} {\!ytwo} 
  \let\!M=\!MRect
  \ignorespaces}
\def\shaderectanglesoff{%
  \def\!shaderectangle{}%
  \ignorespaces}
\def\!!shaderectangle{%
  \!dimenA=\!xtwo  \advance \!dimenA -\!xone
  \!dimenB=\!ytwo  \advance \!dimenB -\!yone
  \ifdim \!dimenA<\!dimenB
    \!startvshade (\!xone,\!yone,\!ytwo)
    \!lshade      (\!xtwo,\!yone,\!ytwo)
  \else
    \!starthshade (\!yone,\!xone,\!xtwo)
    \!lshade      (\!ytwo,\!xone,\!xtwo)
  \fi
  \ignorespaces}
\def\frame{%
  \!ifnextchar<{\!frame}{\!frame<\!zpt> }}
\long\def\!frame<#1> #2{%
  \beginpicture
    \setcoordinatesystem units <1pt,1pt> point at 0 0 
    \put {#2} [Bl] at 0 0 
    \!dimenA=#1\relax
    \!dimenB=\!wd \advance \!dimenB \!dimenA
    \!dimenC=\!ht \advance \!dimenC \!dimenA
    \!dimenD=\!dp \advance \!dimenD \!dimenA
    \let\!MFr=\!M
    \!setdimenmode
    \putrectangle corners at {-\!dimenA} {-\!dimenD} and {\!dimenB} {\!dimenC}
    \!setcoordmode
    \let\!M=\!MFr
  \endpicture
  \ignorespaces}
\def\rectangle <#1> <#2> {%
  \setbox0=\hbox{}\wd0=#1\ht0=#2\frame {\box0}}
\def\!plotfromfile"#1"{%
  \expandafter\!drawcurve \input #1 /}
\def\setquadratic{%
  \let\!drawcurve=\!qcurve
  \let\!!Shade=\!!qShade
  \let\!!!Shade=\!!!qShade}
\def\setlinear{%
  \let\!drawcurve=\!lcurve
  \let\!!Shade=\!!lShade
  \let\!!!Shade=\!!!lShade}
\def\sethistograms{%
  \let\!drawcurve=\!hcurve}
\def\!qcurve #1 #2 {%
  \!start (#1,#2)
  \!Qjoin}
\def\!Qjoin#1 #2 #3 #4 {%
  \!qjoin (#1,#2) (#3,#4)             
  \!ifnextchar/{\!finish}{\!Qjoin}}
\def\!lcurve #1 #2 {%
  \!start (#1,#2)
  \!Ljoin}
\def\!Ljoin#1 #2 {%
  \!ljoin (#1,#2)                    
  \!ifnextchar/{\!finish}{\!Ljoin}}
\def\!finish/{\ignorespaces}
\def\!hcurve #1 #2 {%
  \edef\!hxS{#1}%
  \edef\!hyS{#2}%
  \!hjoin}
\def\!hjoin#1 #2 {%
  \putrectangle corners at {\!hxS} {\!hyS} and {#1} {#2}
  \edef\!hxS{#1}%
  \!ifnextchar/{\!finish}{\!hjoin}}
\def\vshade #1 #2 #3 {%
  \!startvshade (#1,#2,#3)
  \!Shadewhat}
\def\hshade #1 #2 #3 {%
  \!starthshade (#1,#2,#3)
  \!Shadewhat}
\def\!Shadewhat{%
  \futurelet\!nextchar\!Shade}
\def\!Shade{%
  \if <\!nextchar
    \def\!nextShade{\!!Shade}%
  \else
    \if /\!nextchar
      \def\!nextShade{\!finish}%
    \else
      \def\!nextShade{\!!!Shade}%
    \fi
  \fi
  \!nextShade}
\def\!!lShade<#1> #2 #3 #4 {%
  \!lshade <#1> (#2,#3,#4)                 
  \!Shadewhat}
\def\!!!lShade#1 #2 #3 {%
  \!lshade (#1,#2,#3)
  \!Shadewhat} 
\def\!!qShade<#1> #2 #3 #4 #5 #6 #7 {%
  \!qshade <#1> (#2,#3,#4) (#5,#6,#7)      
  \!Shadewhat}
\def\!!!qShade#1 #2 #3 #4 #5 #6 {%
  \!qshade (#1,#2,#3) (#4,#5,#6)
  \!Shadewhat} 
\def\setdashpattern <#1>{%
  \def\!Flist{}\def\!Blist{}\def\!UDlist{}%
  \!countA=0
  \!ecfor\!item:=#1\do{%
    \!dimenA=\!item\relax
    \expandafter\!rightappend\the\!dimenA\withCS{\\}\to\!UDlist%
    \advance\!countA  1
    \ifodd\!countA
      \expandafter\!rightappend\the\!dimenA\withCS{\!Rule}\to\!Flist%
      \expandafter\!leftappend\the\!dimenA\withCS{\!Rule}\to\!Blist%
    \else 
      \expandafter\!rightappend\the\!dimenA\withCS{\!Skip}\to\!Flist%
      \expandafter\!leftappend\the\!dimenA\withCS{\!Skip}\to\!Blist%
    \fi}%
  \!leaderlength=\!zpt
  \def\!Rule##1{\advance\!leaderlength  ##1}%
  \def\!Skip##1{\advance\!leaderlength  ##1}%
  \!Flist%
  \ifdim\!leaderlength>\!zpt 
  \else
    \def\!Flist{\!Skip{24in}}\def\!Blist{\!Skip{24in}}\ignorespaces
    \def\!UDlist{\\{\!zpt}\\{24in}}\ignorespaces
    \!leaderlength=24in
  \fi
  \!dashingon}
\def\!dashingon{%
  \def\!advancedashing{\!!advancedashing}%
  \def\!drawlinearsegment{\!lineardashed}%
  \def\!puthline{\!putdashedhline}%
  \def\!putvline{\!putdashedvline}%
  \ignorespaces}%
\def\!dashingoff{%
  \def\!advancedashing{\relax}%
  \def\!drawlinearsegment{\!linearsolid}%
  \def\!puthline{\!putsolidhline}%
  \def\!putvline{\!putsolidvline}%
  \ignorespaces}
\def\setdots{%
  \!ifnextchar<{\!setdots}{\!setdots<5pt>}}
\def\!setdots<#1>{%
  \!dimenB=#1\advance\!dimenB -\plotsymbolspacing
  \ifdim\!dimenB<\!zpt
    \!dimenB=\!zpt
  \fi
\setdashpattern <\plotsymbolspacing,\!dimenB>}
\def\setdotsnear <#1> for <#2>{%
  \!dimenB=#2\relax  \advance\!dimenB -.05pt  
  \!dimenC=#1\relax  \!countA=\!dimenC 
  \!dimenD=\!dimenB  \advance\!dimenD .5\!dimenC  \!countB=\!dimenD
  \divide \!countB  \!countA
  \ifnum 1>\!countB 
    \!countB=1
  \fi
  \divide\!dimenB  \!countB
  \setdots <\!dimenB>}
\def\setdashes{%
  \!ifnextchar<{\!setdashes}{\!setdashes<5pt>}}
\def\!setdashes<#1>{\setdashpattern <#1,#1>}
\def\setdashesnear <#1> for <#2>{%
  \!dimenB=#2\relax  
  \!dimenC=#1\relax  \!countA=\!dimenC 
  \!dimenD=\!dimenB  \advance\!dimenD .5\!dimenC  \!countB=\!dimenD
  \divide \!countB  \!countA
  \ifodd \!countB 
  \else 
    \advance \!countB  1
  \fi
  \divide\!dimenB  \!countB
  \setdashes <\!dimenB>}
\def\setsolid{%
  \def\!Flist{\!Rule{24in}}\def\!Blist{\!Rule{24in}}%
  \def\!UDlist{\\{24in}\\{\!zpt}}%
  \!dashingoff}  
\def\!divide#1#2#3{%
  \!dimenB=#1
  \!dimenC=#2
  \!dimenD=\!dimenB
  \divide \!dimenD \!dimenC
  \!dimenA=\!dimenD
  \multiply\!dimenD \!dimenC
  \advance\!dimenB -\!dimenD
  \!dimenD=\!dimenC
    \ifdim\!dimenD<\!zpt \!dimenD=-\!dimenD 
  \fi
  \ifdim\!dimenD<64pt
    \!divstep[\!tfs]\!divstep[\!tfs]%
  \else 
    \!!divide
  \fi
  #3=\!dimenA\ignorespaces}
\def\!!divide{%
  \ifdim\!dimenD<256pt
    \!divstep[64]\!divstep[32]\!divstep[32]%
  \else 
    \!divstep[8]\!divstep[8]\!divstep[8]\!divstep[8]\!divstep[8]%
    \!dimenA=2\!dimenA
  \fi}
\def\!divstep[#1]{
  \!dimenB=#1\!dimenB
  \!dimenD=\!dimenB
    \divide \!dimenD by \!dimenC
  \!dimenA=#1\!dimenA
    \advance\!dimenA by \!dimenD%
  \multiply\!dimenD by \!dimenC
    \advance\!dimenB by -\!dimenD}
\def\Divide <#1> by <#2> forming <#3> {%
  \!divide{#1}{#2}{#3}}
\def\ellipticalarc axes ratio #1:#2 #3 degrees from #4 #5 center at #6 #7 {%
  \!angle=#3pt\relax
  \ifdim\!angle>\!zpt 
    \def\!sign{}
  \else 
    \def\!sign{-}\!angle=-\!angle
  \fi
  \!xxloc=\!M{#6}\!xunit
  \!yyloc=\!M{#7}\!yunit     
  \!xxS=\!M{#4}\!xunit
  \!yyS=\!M{#5}\!yunit
  \advance\!xxS -\!xxloc
  \advance\!yyS -\!yyloc
  \!divide\!xxS{#1pt}\!xxS 
  \!divide\!yyS{#2pt}\!yyS 
  \let\!MC=\!M
  \!setdimenmode
  \!xS=#1\!xxS  \advance\!xS\!xxloc
  \!yS=#2\!yyS  \advance\!yS\!yyloc
  \!start (\!xS,\!yS)%
  \!loop\ifdim\!angle>14.9999pt
    \!rotate(\!xxS,\!yyS)by(\!cos,\!sign\!sin)to(\!xxM,\!yyM) 
    \!rotate(\!xxM,\!yyM)by(\!cos,\!sign\!sin)to(\!xxE,\!yyE)
    \!xM=#1\!xxM  \advance\!xM\!xxloc  \!yM=#2\!yyM  \advance\!yM\!yyloc
    \!xE=#1\!xxE  \advance\!xE\!xxloc  \!yE=#2\!yyE  \advance\!yE\!yyloc
    \!qjoin (\!xM,\!yM) (\!xE,\!yE)
    \!xxS=\!xxE  \!yyS=\!yyE 
    \advance \!angle -15pt
  \repeat
  \ifdim\!angle>\!zpt
    \!angle=100.53096\!angle
    \divide \!angle 360 
    \!sinandcos\!angle\!!sin\!!cos
    \!rotate(\!xxS,\!yyS)by(\!!cos,\!sign\!!sin)to(\!xxM,\!yyM) 
    \!rotate(\!xxM,\!yyM)by(\!!cos,\!sign\!!sin)to(\!xxE,\!yyE)
    \!xM=#1\!xxM  \advance\!xM\!xxloc  \!yM=#2\!yyM  \advance\!yM\!yyloc
    \!xE=#1\!xxE  \advance\!xE\!xxloc  \!yE=#2\!yyE  \advance\!yE\!yyloc
    \!qjoin (\!xM,\!yM) (\!xE,\!yE)
  \fi
  \let\!M=\!MC
  \ignorespaces}
\def\!rotate(#1,#2)by(#3,#4)to(#5,#6){%
  \!dimenA=#3#1\advance \!dimenA -#4#2
  \!dimenB=#3#2\advance \!dimenB  #4#1
  \divide \!dimenA 32  \divide \!dimenB 32 
  #5=\!dimenA  #6=\!dimenB
  \ignorespaces}
\def\!sin{4.17684}
\def\!cos{31.72624}
\def\!sinandcos#1#2#3{%
 \!dimenD=#1
 \!dimenA=\!dimenD
 \!dimenB=32pt
 \!removept\!dimenD\!value
 \!dimenC=\!dimenD
 \!dimenC=\!value\!dimenC \divide\!dimenC by 64 
 \advance\!dimenB by -\!dimenC
 \!dimenC=\!value\!dimenC \divide\!dimenC by 96 
 \advance\!dimenA by -\!dimenC
 \!dimenC=\!value\!dimenC \divide\!dimenC by 128 
 \advance\!dimenB by \!dimenC%
 \!removept\!dimenA#2
 \!removept\!dimenB#3
 \ignorespaces}
\def\putrule#1from #2 #3 to #4 #5 {%
  \!xloc=\!M{#2}\!xunit  \!xxloc=\!M{#4}\!xunit%
  \!yloc=\!M{#3}\!yunit  \!yyloc=\!M{#5}\!yunit%
  \!dxpos=\!xxloc  \advance\!dxpos by -\!xloc
  \!dypos=\!yyloc  \advance\!dypos by -\!yloc
  \ifdim\!dypos=\!zpt
    \def\!!Line{\!puthline{#1}}\ignorespaces
  \else
    \ifdim\!dxpos=\!zpt
      \def\!!Line{\!putvline{#1}}\ignorespaces
    \else 
       \def\!!Line{}
    \fi
  \fi
  \let\!ML=\!M
  \!setdimenmode
  \!!Line%
  \let\!M=\!ML
  \ignorespaces}
\def\!putsolidhline#1{%
  \ifdim\!dxpos>\!zpt 
    \put{\!hline\!dxpos}#1[l] at {\!xloc} {\!yloc}
  \else 
    \put{\!hline{-\!dxpos}}#1[l] at {\!xxloc} {\!yyloc}
  \fi
  \ignorespaces}
\def\!putsolidvline#1{%
  \ifdim\!dypos>\!zpt 
    \put{\!vline\!dypos}#1[b] at {\!xloc} {\!yloc}
  \else 
    \put{\!vline{-\!dypos}}#1[b] at {\!xxloc} {\!yyloc}
  \fi
  \ignorespaces}
\def\!hline#1{\hbox to #1{\leaders \hrule height\linethickness\hfill}}
\def\!vline#1{\vbox to #1{\leaders \vrule width\linethickness\vfill}}
\def\!putdashedhline#1{%
  \ifdim\!dxpos>\!zpt 
    \!DLsetup\!Flist\!dxpos
    \put{\hbox to \!totalleaderlength{\!hleaders}\!hpartialpattern\!Rtrunc}
      #1[l] at {\!xloc} {\!yloc} 
  \else 
    \!DLsetup\!Blist{-\!dxpos}
    \put{\!hpartialpattern\!Ltrunc\hbox to \!totalleaderlength{\!hleaders}}
      #1[r] at {\!xloc} {\!yloc} 
  \fi
  \ignorespaces}
\def\!putdashedvline#1{%
  \!dypos=-\!dypos
  \ifdim\!dypos>\!zpt 
    \!DLsetup\!Flist\!dypos 
    \put{\vbox{\vbox to \!totalleaderlength{\!vleaders}
      \!vpartialpattern\!Rtrunc}}#1[t] at {\!xloc} {\!yloc} 
  \else 
    \!DLsetup\!Blist{-\!dypos}
    \put{\vbox{\!vpartialpattern\!Ltrunc
      \vbox to \!totalleaderlength{\!vleaders}}}#1[b] at {\!xloc} {\!yloc} 
  \fi
  \ignorespaces}
\def\!DLsetup#1#2{
  \let\!RSlist=#1
  \!countB=#2
  \!countA=\!leaderlength
  \divide\!countB by \!countA
  \!totalleaderlength=\!countB\!leaderlength
  \!Rresiduallength=#2%
  \advance \!Rresiduallength by -\!totalleaderlength
  \!Lresiduallength=\!leaderlength
  \advance \!Lresiduallength by -\!Rresiduallength
  \ignorespaces}
\def\!hleaders{%
  \def\!Rule##1{\vrule height\linethickness width##1}%
  \def\!Skip##1{\hskip##1}%
  \leaders\hbox{\!RSlist}\hfill}
\def\!hpartialpattern#1{%
  \!dimenA=\!zpt \!dimenB=\!zpt 
  \def\!Rule##1{#1{##1}\vrule height\linethickness width\!dimenD}%
  \def\!Skip##1{#1{##1}\hskip\!dimenD}%
  \!RSlist}
\def\!vleaders{%
  \def\!Rule##1{\hrule width\linethickness height##1}%
  \def\!Skip##1{\vskip##1}%
  \leaders\vbox{\!RSlist}\vfill}
\def\!vpartialpattern#1{%
  \!dimenA=\!zpt \!dimenB=\!zpt 
  \def\!Rule##1{#1{##1}\hrule width\linethickness height\!dimenD}%
  \def\!Skip##1{#1{##1}\vskip\!dimenD}%
  \!RSlist}
\def\!Rtrunc#1{\!trunc{#1}>\!Rresiduallength}
\def\!Ltrunc#1{\!trunc{#1}<\!Lresiduallength}
\def\!trunc#1#2#3{%
  \!dimenA=\!dimenB         
  \advance\!dimenB by #1%
  \!dimenD=\!dimenB  \ifdim\!dimenD#2#3\!dimenD=#3\fi
  \!dimenC=\!dimenA  \ifdim\!dimenC#2#3\!dimenC=#3\fi
  \advance \!dimenD by -\!dimenC}
\def\!start (#1,#2){%
  \!plotxorigin=\!xorigin  \advance \!plotxorigin by \!plotsymbolxshift
  \!plotyorigin=\!yorigin  \advance \!plotyorigin by \!plotsymbolyshift
  \!xS=\!M{#1}\!xunit \!yS=\!M{#2}\!yunit
  \!rotateaboutpivot\!xS\!yS
  \!copylist\!UDlist\to\!!UDlist
  \!getnextvalueof\!downlength\from\!!UDlist
  \!distacross=\!zpt
  \!intervalno=0 
  \global\totalarclength=\!zpt
  \ignorespaces}
\def\!ljoin (#1,#2){%
  \advance\!intervalno by 1
  \!xE=\!M{#1}\!xunit \!yE=\!M{#2}\!yunit
  \!rotateaboutpivot\!xE\!yE
  \!xdiff=\!xE \advance \!xdiff by -\!xS
  \!ydiff=\!yE \advance \!ydiff by -\!yS
  \!Pythag\!xdiff\!ydiff\!arclength
  \global\advance \totalarclength by \!arclength%
  \!drawlinearsegment
  \!xS=\!xE \!yS=\!yE
  \ignorespaces}
\def\!linearsolid{%
  \!npoints=\!arclength
  \!countA=\plotsymbolspacing
  \divide\!npoints by \!countA
  \ifnum \!npoints<1 
    \!npoints=1 
  \fi
  \divide\!xdiff by \!npoints
  \divide\!ydiff by \!npoints
  \!xpos=\!xS \!ypos=\!yS
  \loop\ifnum\!npoints>-1
    \!plotifinbounds
    \advance \!xpos by \!xdiff
    \advance \!ypos by \!ydiff
    \advance \!npoints by -1
  \repeat
  \ignorespaces}
\def\!lineardashed{%
  \ifdim\!distacross>\!arclength
    \advance \!distacross by -\!arclength  
  \else
    \loop\ifdim\!distacross<\!arclength
      \!divide\!distacross\!arclength\!dimenA
      \!removept\!dimenA\!t
      \!xpos=\!t\!xdiff \advance \!xpos by \!xS
      \!ypos=\!t\!ydiff \advance \!ypos by \!yS
      \!plotifinbounds
      \advance\!distacross by \plotsymbolspacing
      \!advancedashing
    \repeat  
    \advance \!distacross by -\!arclength
  \fi
  \ignorespaces}
\def\!!advancedashing{%
  \advance\!downlength by -\plotsymbolspacing
  \ifdim \!downlength>\!zpt
  \else
    \advance\!distacross by \!downlength
    \!getnextvalueof\!uplength\from\!!UDlist
    \advance\!distacross by \!uplength
    \!getnextvalueof\!downlength\from\!!UDlist
  \fi}
\def\inboundscheckoff{%
  \def\!plotifinbounds{\!plot(\!xpos,\!ypos)}%
  \def\!initinboundscheck{\relax}\ignorespaces}
\def\!!plotifinbounds{%
  \ifdim \!xpos<\!checkleft
  \else
    \ifdim \!xpos>\!checkright
    \else
      \ifdim \!ypos<\!checkbot
      \else
         \ifdim \!ypos>\!checktop
         \else
           \!plot(\!xpos,\!ypos)
         \fi 
      \fi
    \fi
  \fi}
\def\!!initinboundscheck{%
  \!checkleft=\!arealloc     \advance\!checkleft by \!xorigin
  \!checkright=\!arearloc    \advance\!checkright by \!xorigin
  \!checkbot=\!areabloc      \advance\!checkbot by \!yorigin
  \!checktop=\!areatloc      \advance\!checktop by \!yorigin}
\def\!logten#1#2{%
  \expandafter\!!logten#1\!nil
  \!removept\!dimenF#2%
  \ignorespaces}
\def\!!logten#1#2\!nil{%
  \if -#1%
    \!dimenF=\!zpt
    \def\!next{\ignorespaces}%
  \else
    \if +#1%
      \def\!next{\!!logten#2\!nil}%
    \else
      \if .#1%
        \def\!next{\!!logten0.#2\!nil}%
      \else
        \def\!next{\!!!logten#1#2..\!nil}%
      \fi
    \fi
  \fi
  \!next}
\def\!!!logten#1#2.#3.#4\!nil{%
  \!dimenF=1pt 
  \if 0#1%
    \!!logshift#3pt 
  \else 
    \!logshift#2/
    \!dimenE=#1.#2#3pt 
  \fi 
  \ifdim \!dimenE<\!rootten
    \multiply \!dimenE 10 
    \advance  \!dimenF -1pt
  \fi
  \!dimenG=\!dimenE
    \advance\!dimenG 10pt
  \advance\!dimenE -10pt 
  \multiply\!dimenE 10 
  \!divide\!dimenE\!dimenG\!dimenE
  \!removept\!dimenE\!t
  \!dimenG=\!t\!dimenE
  \!removept\!dimenG\!tt
  \!dimenH=\!tt\!tenAe
    \divide\!dimenH 100
  \advance\!dimenH \!tenAc
  \!dimenH=\!tt\!dimenH
    \divide\!dimenH 100   
  \advance\!dimenH \!tenAa
  \!dimenH=\!t\!dimenH
    \divide\!dimenH 100 
  \advance\!dimenF \!dimenH}
\def\!logshift#1{%
  \if #1/%
    \def\!next{\ignorespaces}%
  \else
    \advance\!dimenF 1pt 
    \def\!next{\!logshift}%
  \fi 
  \!next}
 \def\!!logshift#1{%
   \advance\!dimenF -1pt
   \if 0#1%
     \def\!next{\!!logshift}%
   \else
     \if p#1%
       \!dimenF=1pt
       \def\!next{\!dimenE=1p}%
     \else
       \def\!next{\!dimenE=#1.}%
     \fi
   \fi
   \!next}
\def\beginpicture{%
  \setbox\!picbox=\hbox\bgroup%
  \!xleft=\maxdimen  
  \!xright=-\maxdimen
  \!ybot=\maxdimen
  \!ytop=-\maxdimen}
\def\endpicture{%
  \ifdim\!xleft=\maxdimen
    \!xleft=\!zpt \!xright=\!zpt \!ybot=\!zpt \!ytop=\!zpt 
  \fi
  \global\!Xleft=\!xleft \global\!Xright=\!xright
  \global\!Ybot=\!ybot \global\!Ytop=\!ytop
  \egroup%
  \ht\!picbox=\!Ytop  \dp\!picbox=-\!Ybot
  \ifdim\!Ybot>\!zpt
  \else 
    \ifdim\!Ytop<\!zpt
      \!Ybot=\!Ytop
    \else
      \!Ybot=\!zpt
    \fi
  \fi
  \hbox{\kern-\!Xleft\lower\!Ybot\box\!picbox\kern\!Xright}}
\def\endpicturesave <#1,#2>{%
  \endpicture \global #1=\!Xleft \global #2=\!Ybot \ignorespaces}
\def\setcoordinatesystem{%
  \!ifnextchar{u}{\!getlengths }
    {\!getlengths units <\!xunit,\!yunit>}}
\def\!getlengths units <#1,#2>{%
  \!xunit=#1\relax
  \!yunit=#2\relax
  \!ifcoordmode 
    \let\!SCnext=\!SCccheckforRP
  \else
    \let\!SCnext=\!SCdcheckforRP
  \fi
  \!SCnext}
\def\!SCccheckforRP{%
  \!ifnextchar{p}{\!cgetreference }
    {\!cgetreference point at {\!xref} {\!yref} }}
\def\!cgetreference point at #1 #2 {%
  \edef\!xref{#1}\edef\!yref{#2}%
  \!xorigin=\!xref\!xunit  \!yorigin=\!yref\!yunit  
  \!initinboundscheck 
  \ignorespaces}
\def\!SCdcheckforRP{%
  \!ifnextchar{p}{\!dgetreference}%
    {\ignorespaces}}
\def\!dgetreference point at #1 #2 {%
  \!xorigin=#1\relax  \!yorigin=#2\relax
  \ignorespaces}
\long\def\put#1#2 at #3 #4 {%
  \!setputobject{#1}{#2}%
  \!xpos=\!M{#3}\!xunit  \!ypos=\!M{#4}\!yunit  
  \!rotateaboutpivot\!xpos\!ypos%
  \advance\!xpos -\!xorigin  \advance\!xpos -\!xshift
  \advance\!ypos -\!yorigin  \advance\!ypos -\!yshift
  \kern\!xpos\raise\!ypos\box\!putobject\kern-\!xpos%
  \!doaccounting\ignorespaces}
\long\def\multiput #1#2 at {%
  \!setputobject{#1}{#2}%
  \!ifnextchar"{\!putfromfile}{\!multiput}}
\def\!putfromfile"#1"{%
  \expandafter\!multiput \input #1 /}
\def\!multiput{%
  \futurelet\!nextchar\!!multiput}
\def\!!multiput{%
  \if *\!nextchar
    \def\!nextput{\!alsoby}%
  \else
    \if /\!nextchar
      \def\!nextput{\!finishmultiput}%
    \else
      \def\!nextput{\!alsoat}%
    \fi
  \fi
  \!nextput}
\def\!finishmultiput/{%
  \setbox\!putobject=\hbox{}%
  \ignorespaces}
\def\!alsoat#1 #2 {%
  \!xpos=\!M{#1}\!xunit  \!ypos=\!M{#2}\!yunit  
  \!rotateaboutpivot\!xpos\!ypos%
  \advance\!xpos -\!xorigin  \advance\!xpos -\!xshift
  \advance\!ypos -\!yorigin  \advance\!ypos -\!yshift
  \kern\!xpos\raise\!ypos\copy\!putobject\kern-\!xpos%
  \!doaccounting
  \!multiput}
\def\!alsoby*#1 #2 #3 {%
  \!dxpos=\!M{#2}\!xunit \!dypos=\!M{#3}\!yunit 
  \!rotateonly\!dxpos\!dypos
  \!ntemp=#1%
  \!!loop\ifnum\!ntemp>0
    \advance\!xpos by \!dxpos  \advance\!ypos by \!dypos
    \kern\!xpos\raise\!ypos\copy\!putobject\kern-\!xpos%
    \advance\!ntemp by -1
  \repeat
  \!doaccounting 
  \!multiput}
\def\accountingon{\def\!doaccounting{\!!doaccounting}\ignorespaces}
\def\!!doaccounting{%
  \!xtemp=\!xpos  
  \!ytemp=\!ypos
  \ifdim\!xtemp<\!xleft 
     \!xleft=\!xtemp 
  \fi
  \advance\!xtemp by  \!wd 
  \ifdim\!xright<\!xtemp 
    \!xright=\!xtemp
  \fi
  \advance\!ytemp by -\!dp
  \ifdim\!ytemp<\!ybot  
    \!ybot=\!ytemp
  \fi
  \advance\!ytemp by  \!dp
  \advance\!ytemp by  \!ht 
  \ifdim\!ytemp>\!ytop  
    \!ytop=\!ytemp  
  \fi}
\long\def\!setputobject#1#2{%
  \setbox\!putobject=\hbox{#1}%
  \!ht=\ht\!putobject  \!dp=\dp\!putobject  \!wd=\wd\!putobject
  \wd\!putobject=\!zpt
  \!xshift=.5\!wd   \!yshift=.5\!ht   \advance\!yshift by -.5\!dp
  \edef\!putorientation{#2}%
  \expandafter\!SPOreadA\!putorientation[]\!nil%
  \expandafter\!SPOreadB\!putorientation<\!zpt,\!zpt>\!nil\ignorespaces}
\def\!SPOreadA#1[#2]#3\!nil{\!etfor\!orientation:=#2\do\!SPOreviseshift}
\def\!SPOreadB#1<#2,#3>#4\!nil{\advance\!xshift by -#2\advance\!yshift by -#3}
\def\!SPOreviseshift{%
  \if l\!orientation 
    \!xshift=\!zpt
  \else 
    \if r\!orientation 
      \!xshift=\!wd
    \else 
      \if b\!orientation
        \!yshift=-\!dp
      \else 
        \if B\!orientation 
          \!yshift=\!zpt
        \else 
          \if t\!orientation 
            \!yshift=\!ht
          \fi 
        \fi
      \fi
    \fi
  \fi}
\long\def\!dimenput#1#2(#3,#4){%
  \!setputobject{#1}{#2}%
  \!xpos=#3\advance\!xpos by -\!xshift
  \!ypos=#4\advance\!ypos by -\!yshift
  \kern\!xpos\raise\!ypos\box\!putobject\kern-\!xpos%
  \!doaccounting\ignorespaces}
\def\!setdimenmode{%
  \let\!M=\!M!!\ignorespaces}
\def\!setcoordmode{%
  \let\!M=\!M!\ignorespaces}
\def\!ifcoordmode{%
  \ifx \!M \!M!}
\def\!ifdimenmode{%
  \ifx \!M \!M!!}
\def\!M!#1#2{#1#2} 
\def\!M!!#1#2{#1}
\let\setdimensionmode=\!setdimenmode
\let\setcoordinatemode=\!setcoordmode
\def\!stack[#1]{%
  \let\!lglue=\hfill \let\!rglue=\hfill
  \expandafter\let\csname !#1glue\endcsname=\relax
  \!ifnextchar<{\!!stack}{\!!stack<\stackleading>}}
\def\!!stack<#1>#2{%
  \vbox{\def\!valueslist{}\!ecfor\!value:=#2\do{%
    \expandafter\!rightappend\!value\withCS{\\}\to\!valueslist}%
    \!lop\!valueslist\to\!value
    \let\\=\cr\lineskiplimit=\maxdimen\lineskip=#1%
    \baselineskip=-1000pt\halign{\!lglue##\!rglue\cr \!value\!valueslist\cr}}%
  \ignorespaces}
\def\!lines[#1]#2{%
  \let\!lglue=\hfill \let\!rglue=\hfill
  \expandafter\let\csname !#1glue\endcsname=\relax
  \vbox{\halign{\!lglue##\!rglue\cr #2\crcr}}%
  \ignorespaces}
\def\!Lines[#1]#2{%
  \let\!lglue=\hfill \let\!rglue=\hfill
  \expandafter\let\csname !#1glue\endcsname=\relax
  \vtop{\halign{\!lglue##\!rglue\cr #2\crcr}}%
  \ignorespaces}
\def\setplotsymbol(#1#2){%
  \!setputobject{#1}{#2}
  \setbox\!plotsymbol=\box\!putobject%
  \!plotsymbolxshift=\!xshift 
  \!plotsymbolyshift=\!yshift 
  \ignorespaces}
\def\!!plot(#1,#2){%
  \!dimenA=-\!plotxorigin \advance \!dimenA by #1
  \!dimenB=-\!plotyorigin \advance \!dimenB by #2
  \kern\!dimenA\raise\!dimenB\copy\!plotsymbol\kern-\!dimenA%
  \ignorespaces}
\def\!!!plot(#1,#2){%
  \!dimenA=-\!plotxorigin \advance \!dimenA by #1
  \!dimenB=-\!plotyorigin \advance \!dimenB by #2
  \kern\!dimenA\raise\!dimenB\copy\!plotsymbol\kern-\!dimenA%
  \!countE=\!dimenA
  \!countF=\!dimenB
  \immediate\write\!replotfile{\the\!countE,\the\!countF.}%
  \ignorespaces}
\def\savelinesandcurves on "#1" {%
  \immediate\closeout\!replotfile
  \immediate\openout\!replotfile=#1%
  \let\!plot=\!!!plot}
\def\dontsavelinesandcurves {%
  \let\!plot=\!!plot}
\xdef\!Commentsignal{
\def\writesavefile#1 {%
  \immediate\write\!replotfile{\!Commentsignal #1}%
  \ignorespaces}

\def\replot"#1" {%
  \expandafter\!replot\input #1 /}
\def\!replot#1,#2. {%
  \!dimenA=#1sp
  \kern\!dimenA\raise#2sp\copy\!plotsymbol\kern-\!dimenA
  \futurelet\!nextchar\!!replot}
\def\!!replot{%
  \if /\!nextchar 
    \def\!next{\!finish}%
  \else
    \def\!next{\!replot}%
  \fi
  \!next}


 
 
\def\!Pythag#1#2#3{%
  \!dimenE=#1\relax                                     
  \ifdim\!dimenE<\!zpt 
    \!dimenE=-\!dimenE 
  \fi
  \!dimenF=#2\relax
  \ifdim\!dimenF<\!zpt 
    \!dimenF=-\!dimenF 
  \fi
  \advance \!dimenF by \!dimenE
  \ifdim\!dimenF=\!zpt 
    \!dimenG=\!zpt
  \else 
    \!divide{8\!dimenE}\!dimenF\!dimenE
    \advance\!dimenE by -4pt
      \!dimenE=2\!dimenE
    \!removept\!dimenE\!!t
    \!dimenE=\!!t\!dimenE
    \advance\!dimenE by 64pt
    \divide \!dimenE by 2
    \!dimenH=7pt
    \!!Pythag\!!Pythag\!!Pythag
    \!removept\!dimenH\!!t
    \!dimenG=\!!t\!dimenF
    \divide\!dimenG by 8
  \fi
  #3=\!dimenG
  \ignorespaces}

\def\!!Pythag{
  \!divide\!dimenE\!dimenH\!dimenI
  \advance\!dimenH by \!dimenI
    \divide\!dimenH by 2}

\def\placehypotenuse for <#1> and <#2> in <#3> {%
  \!Pythag{#1}{#2}{#3}}

 
 
 
\def\!qjoin (#1,#2) (#3,#4){%
  \advance\!intervalno by 1
  \!ifcoordmode
    \edef\!xmidpt{#1}\edef\!ymidpt{#2}%
  \else
    \!dimenA=#1\relax \edef\!xmidpt{\the\!dimenA}%
    \!dimenA=#2\relax \edef\!ymidpt{\the\!dimenA}%
  \fi
  \!xM=\!M{#1}\!xunit  \!yM=\!M{#2}\!yunit   \!rotateaboutpivot\!xM\!yM
  \!xE=\!M{#3}\!xunit  \!yE=\!M{#4}\!yunit   \!rotateaboutpivot\!xE\!yE
%
  \!dimenA=\!xM  \advance \!dimenA by -\!xS
  \!dimenB=\!xE  \advance \!dimenB by -\!xM
  \!xB=3\!dimenA \advance \!xB by -\!dimenB
  \!xC=2\!dimenB \advance \!xC by -2\!dimenA
%
  \!dimenA=\!yM  \advance \!dimenA by -\!yS%
  \!dimenB=\!yE  \advance \!dimenB by -\!yM%
  \!yB=3\!dimenA \advance \!yB by -\!dimenB%
  \!yC=2\!dimenB \advance \!yC by -2\!dimenA%
%
  \!xprime=\!xB  \!yprime=\!yB
  \!dxprime=.5\!xC  \!dyprime=.5\!yC
  \!getf \!midarclength=\!dimenA
  \!getf \advance \!midarclength by 4\!dimenA
  \!getf \advance \!midarclength by \!dimenA
  \divide \!midarclength by 12
%
  \!arclength=\!dimenA
  \!getf \advance \!arclength by 4\!dimenA
  \!getf \advance \!arclength by \!dimenA
  \divide \!arclength by 12
  \advance \!arclength by \!midarclength
  \global\advance \totalarclength by \!arclength
%
%
  \ifdim\!distacross>\!arclength 
    \advance \!distacross by -\!arclength
  \else
    \!initinverseinterp
    \loop\ifdim\!distacross<\!arclength
      \!inverseinterp
      \!xpos=\!t\!xC \advance\!xpos by \!xB
        \!xpos=\!t\!xpos \advance \!xpos by \!xS
      \!ypos=\!t\!yC \advance\!ypos by \!yB
        \!ypos=\!t\!ypos \advance \!ypos by \!yS
      \!plotifinbounds
      \advance\!distacross \plotsymbolspacing
      \!advancedashing
    \repeat  
    \advance \!distacross by -\!arclength
  \fi
  \!xS=\!xE
  \!yS=\!yE
  \ignorespaces}

\def\!getf{\!Pythag\!xprime\!yprime\!dimenA%
  \advance\!xprime by \!dxprime
  \advance\!yprime by \!dyprime}

\def\!initinverseinterp{%
  \ifdim\!arclength>\!zpt
    \!divide{8\!midarclength}\!arclength\!dimenE
    \ifdim\!dimenE<\!wmin \!setinverselinear
    \else 
      \ifdim\!dimenE>\!wmax \!setinverselinear
      \else
        \def\!inverseinterp{\!inversequad}\ignorespaces
%
%
         \!removept\!dimenE\!Ew
         \!dimenF=-\!Ew\!dimenE
         \advance\!dimenF by 32pt
         \!dimenG=8pt 
         \advance\!dimenG by -\!dimenE
         \!dimenG=\!Ew\!dimenG
         \!divide\!dimenF\!dimenG\!beta
         \!gamma=1pt
         \advance \!gamma by -\!beta
      \fi
    \fi
  \fi
  \ignorespaces}

\def\!inversequad{%
  \!divide\!distacross\!arclength\!dimenG
  \!removept\!dimenG\!v
  \!dimenG=\!v\!gamma
  \advance\!dimenG by \!beta
  \!dimenG=\!v\!dimenG
  \!removept\!dimenG\!t}

\def\!setinverselinear{%
  \def\!inverseinterp{\!inverselinear}%
  \divide\!dimenE by 8 \!removept\!dimenE\!t
  \!countC=\!intervalno \multiply \!countC 2
  \!countB=\!countC     \advance \!countB -1
  \!countA=\!countB     \advance \!countA -1
  \wlog{\the\!countB th point (\!xmidpt,\!ymidpt) being plotted 
    doesn't lie in the}%
  \wlog{ middle third of the arc between the \the\!countA th 
    and \the\!countC th points:}%
  \wlog{ [arc length \the\!countA\space to \the\!countB]/[arc length 
    \the \!countA\space to \the\!countC]=\!t.}%
  \ignorespaces}
 
\def\!inverselinear{%
  \!divide\!distacross\!arclength\!dimenG
  \!removept\!dimenG\!t}

 

\def\startrotation{%
  \let\!rotateaboutpivot=\!!rotateaboutpivot
  \let\!rotateonly=\!!rotateonly
  \!ifnextchar{b}{\!getsincos }%
    {\!getsincos by {\!cosrotationangle} {\!sinrotationangle} }}
\def\!getsincos by #1 #2 {%
  \edef\!cosrotationangle{#1}%
  \edef\!sinrotationangle{#2}%
  \!ifcoordmode 
    \let\!ROnext=\!ccheckforpivot
  \else
    \let\!ROnext=\!dcheckforpivot
  \fi
  \!ROnext}
\def\!ccheckforpivot{%
  \!ifnextchar{a}{\!cgetpivot}%
    {\!cgetpivot about {\!xpivotcoord} {\!ypivotcoord} }}
\def\!cgetpivot about #1 #2 {%
  \edef\!xpivotcoord{#1}%
  \edef\!ypivotcoord{#2}%
  \!xpivot=#1\!xunit  \!ypivot=#2\!yunit
  \ignorespaces}
\def\!dcheckforpivot{%
  \!ifnextchar{a}{\!dgetpivot}{\ignorespaces}}
\def\!dgetpivot about #1 #2 {%
  \!xpivot=#1\relax  \!ypivot=#2\relax
  \ignorespaces}

\def\stoprotation{%
  \let\!rotateaboutpivot=\!!!rotateaboutpivot
  \let\!rotateonly=\!!!rotateonly
  \ignorespaces}
 
\def\!!rotateaboutpivot#1#2{%
  \!dimenA=#1\relax  \advance\!dimenA -\!xpivot
  \!dimenB=#2\relax  \advance\!dimenB -\!ypivot
  \!dimenC=\!cosrotationangle\!dimenA
    \advance \!dimenC -\!sinrotationangle\!dimenB
  \!dimenD=\!cosrotationangle\!dimenB
    \advance \!dimenD  \!sinrotationangle\!dimenA
  \advance\!dimenC \!xpivot  \advance\!dimenD \!ypivot
  #1=\!dimenC  #2=\!dimenD
  \ignorespaces}

\def\!!rotateonly#1#2{%
  \!dimenA=#1\relax  \!dimenB=#2\relax 
  \!dimenC=\!cosrotationangle\!dimenA
    \advance \!dimenC -\!rotsign\!sinrotationangle\!dimenB
  \!dimenD=\!cosrotationangle\!dimenB
    \advance \!dimenD  \!rotsign\!sinrotationangle\!dimenA
  #1=\!dimenC  #2=\!dimenD
  \ignorespaces}
\def\!rotsign{}
\def\!!!rotateaboutpivot#1#2{\relax}
\def\!!!rotateonly#1#2{\relax}
\stoprotation

\def\!reverserotateonly#1#2{%
  \def\!rotsign{-}%
  \!rotateonly{#1}{#2}%
  \def\!rotsign{}%
  \ignorespaces}

\def\!getspan span <#1>{%
  \!dshade=#1\relax
  \!ifcoordmode 
    \let\!GRnext=\!GRccheckforAP
  \else
    \let\!GRnext=\!GRdcheckforAP
  \fi
  \!GRnext}
\def\!GRccheckforAP{%
  \!ifnextchar{p}{\!cgetanchor }
    {\!cgetanchor point at {\!xshadesave} {\!yshadesave} }}
\def\!cgetanchor point at #1 #2 {%
  \edef\!xshadesave{#1}\edef\!yshadesave{#2}%
  \!xshade=\!xshadesave\!xunit  \!yshade=\!yshadesave\!yunit
  \ignorespaces}
\def\!GRdcheckforAP{%
  \!ifnextchar{p}{\!dgetanchor}%
    {\ignorespaces}}
\def\!dgetanchor point at #1 #2 {%
  \!xshade=#1\relax  \!yshade=#2\relax
  \ignorespaces}

\def\setshadesymbol{%
  \!ifnextchar<{\!setshadesymbol}{\!setshadesymbol<,,,> }}

\def\!setshadesymbol <#1,#2,#3,#4> (#5#6){%
  \!setputobject{#5}{#6}%
  \setbox\!shadesymbol=\box\!putobject%
  \!shadesymbolxshift=\!xshift \!shadesymbolyshift=\!yshift
%
  \!dimenA=\!xshift \advance\!dimenA \!smidge
  \!override\!dimenA{#1}\!lshrinkage%
  \!dimenA=\!wd \advance \!dimenA -\!xshift
    \advance\!dimenA \!smidge
    \!override\!dimenA{#2}\!rshrinkage
  \!dimenA=\!dp \advance \!dimenA \!yshift
    \advance\!dimenA \!smidge
    \!override\!dimenA{#3}\!bshrinkage
  \!dimenA=\!ht \advance \!dimenA -\!yshift
    \advance\!dimenA \!smidge
    \!override\!dimenA{#4}\!tshrinkage
  \ignorespaces}
\def\!smidge{-.2pt}%

\def\!override#1#2#3{%
  \edef\!!override{#2}%
  \ifx \!!override\empty
    #3=#1\relax
  \else
    \if z\!!override
      #3=\!zpt
    \else
      \ifx \!!override\!blankz
        #3=\!zpt
      \else
        #3=#2\relax
      \fi
    \fi
  \fi
  \ignorespaces}
\def\!blankz{ z}

\setshadesymbol ({\fiverm .})

\def\!startvshade#1(#2,#3,#4){%
  \let\!!xunit=\!xunit%
  \let\!!yunit=\!yunit%
  \let\!!xshade=\!xshade%
  \let\!!yshade=\!yshade%
  \def\!getshrinkages{\!vgetshrinkages}%
  \let\!setshadelocation=\!vsetshadelocation%
  \!xS=\!M{#2}\!!xunit
  \!ybS=\!M{#3}\!!yunit
  \!ytS=\!M{#4}\!!yunit
  \!shadexorigin=\!xorigin  \advance \!shadexorigin \!shadesymbolxshift
  \!shadeyorigin=\!yorigin  \advance \!shadeyorigin \!shadesymbolyshift
  \ignorespaces}
 
\def\!starthshade#1(#2,#3,#4){%
  \let\!!xunit=\!yunit%
  \let\!!yunit=\!xunit%
  \let\!!xshade=\!yshade%
  \let\!!yshade=\!xshade%
  \def\!getshrinkages{\!hgetshrinkages}%
  \let\!setshadelocation=\!hsetshadelocation%
  \!xS=\!M{#2}\!!xunit
  \!ybS=\!M{#3}\!!yunit
  \!ytS=\!M{#4}\!!yunit
  \!shadexorigin=\!xorigin  \advance \!shadexorigin \!shadesymbolxshift
  \!shadeyorigin=\!yorigin  \advance \!shadeyorigin \!shadesymbolyshift
  \ignorespaces}

\def\!lattice#1#2#3#4#5{%
  \!dimenA=#1
  \!dimenB=#2
  \!countB=\!dimenB
%
  \!dimenC=#3
  \advance\!dimenC -\!dimenA
  \!countA=\!dimenC
  \divide\!countA \!countB
  \ifdim\!dimenC>\!zpt
    \!dimenD=\!countA\!dimenB
    \ifdim\!dimenD<\!dimenC
      \advance\!countA 1 
    \fi
  \fi
  \!dimenC=\!countA\!dimenB
    \advance\!dimenC \!dimenA
  #4=\!countA
  #5=\!dimenC
  \ignorespaces}

\def\!qshade#1(#2,#3,#4)#5(#6,#7,#8){%
  \!xM=\!M{#2}\!!xunit
  \!ybM=\!M{#3}\!!yunit
  \!ytM=\!M{#4}\!!yunit
  \!xE=\!M{#6}\!!xunit
  \!ybE=\!M{#7}\!!yunit
  \!ytE=\!M{#8}\!!yunit
  \!getcoeffs\!xS\!ybS\!xM\!ybM\!xE\!ybE\!ybB\!ybC
  \!getcoeffs\!xS\!ytS\!xM\!ytM\!xE\!ytE\!ytB\!ytC
  \def\!getylimits{\!qgetylimits}%
  \!shade{#1}\ignorespaces}
 
\def\!lshade#1(#2,#3,#4){%
  \!xE=\!M{#2}\!!xunit
  \!ybE=\!M{#3}\!!yunit
  \!ytE=\!M{#4}\!!yunit
  \!dimenE=\!xE  \advance \!dimenE -\!xS
  \!dimenC=\!ytE \advance \!dimenC -\!ytS
  \!divide\!dimenC\!dimenE\!ytB
  \!dimenC=\!ybE \advance \!dimenC -\!ybS
  \!divide\!dimenC\!dimenE\!ybB
  \def\!getylimits{\!lgetylimits}%
  \!shade{#1}\ignorespaces}
 
\def\!getcoeffs#1#2#3#4#5#6#7#8{%
  \!dimenC=#4\advance \!dimenC -#2
  \!dimenE=#3\advance \!dimenE -#1
  \!divide\!dimenC\!dimenE\!dimenF
  \!dimenC=#6\advance \!dimenC -#4
  \!dimenH=#5\advance \!dimenH -#3
  \!divide\!dimenC\!dimenH\!dimenG
  \advance\!dimenG -\!dimenF
  \advance \!dimenH \!dimenE
  \!divide\!dimenG\!dimenH#8
  \!removept#8\!t
  #7=-\!t\!dimenE
  \advance #7\!dimenF
  \ignorespaces}

\def\!shade#1{%
  \!getshrinkages#1<,,,>\!nil
  \advance \!dimenE \!xS
  \!lattice\!!xshade\!dshade\!dimenE
    \!parity\!xpos
  \!dimenF=-\!dimenF
    \advance\!dimenF \!xE
  \!loop\!not{\ifdim\!xpos>\!dimenF}
    \!shadecolumn%
    \advance\!xpos \!dshade
    \advance\!parity 1
  \repeat
  \!xS=\!xE
  \!ybS=\!ybE
  \!ytS=\!ytE
  \ignorespaces}

\def\!vgetshrinkages#1<#2,#3,#4,#5>#6\!nil{%
  \!override\!lshrinkage{#2}\!dimenE
  \!override\!rshrinkage{#3}\!dimenF
  \!override\!bshrinkage{#4}\!dimenG
  \!override\!tshrinkage{#5}\!dimenH
  \ignorespaces}
\def\!hgetshrinkages#1<#2,#3,#4,#5>#6\!nil{%
  \!override\!lshrinkage{#2}\!dimenG
  \!override\!rshrinkage{#3}\!dimenH
  \!override\!bshrinkage{#4}\!dimenE
  \!override\!tshrinkage{#5}\!dimenF
  \ignorespaces}

\def\!shadecolumn{%
  \!dxpos=\!xpos
  \advance\!dxpos -\!xS
  \!removept\!dxpos\!dx
  \!getylimits
  \advance\!ytpos -\!dimenH
  \advance\!ybpos \!dimenG
  \!yloc=\!!yshade
  \ifodd\!parity 
     \advance\!yloc \!dshade
  \fi
  \!lattice\!yloc{2\!dshade}\!ybpos%
    \!countA\!ypos
  \!dimenA=-\!shadexorigin \advance \!dimenA \!xpos
  \loop\!not{\ifdim\!ypos>\!ytpos}
    \!setshadelocation
    \!rotateaboutpivot\!xloc\!yloc%
    \!dimenA=-\!shadexorigin \advance \!dimenA \!xloc
    \!dimenB=-\!shadeyorigin \advance \!dimenB \!yloc
    \kern\!dimenA \raise\!dimenB\copy\!shadesymbol \kern-\!dimenA
    \advance\!ypos 2\!dshade
  \repeat
  \ignorespaces}
 
\def\!qgetylimits{%
  \!dimenA=\!dx\!ytC              
  \advance\!dimenA \!ytB
  \!ytpos=\!dx\!dimenA
  \advance\!ytpos \!ytS
  \!dimenA=\!dx\!ybC              
  \advance\!dimenA \!ybB
  \!ybpos=\!dx\!dimenA
  \advance\!ybpos \!ybS}
 
\def\!lgetylimits{%
  \!ytpos=\!dx\!ytB
  \advance\!ytpos \!ytS
  \!ybpos=\!dx\!ybB
  \advance\!ybpos \!ybS}
 
\def\!vsetshadelocation{
  \!xloc=\!xpos
  \!yloc=\!ypos}
\def\!hsetshadelocation{
  \!xloc=\!ypos
  \!yloc=\!xpos}





\def\!axisticks {%
  \def\!nextkeyword##1 {%
    \expandafter\ifx\csname !ticks##1\endcsname \relax
      \def\!next{\!fixkeyword{##1}}%
    \else
      \def\!next{\csname !ticks##1\endcsname}%
    \fi
    \!next}%
  \!axissetup
    \def\!axissetup{\relax}%
  \edef\!ticksinoutsign{\!ticksinoutSign}%
  \!ticklength=\longticklength
  \!tickwidth=\linethickness
  \!gridlinestatus
  \!setticktransform
  \!maketick
  \!tickcase=0
  \def\!LTlist{}%
  \!nextkeyword}

\def\ticksout{%
  \def\!ticksinoutSign{+}}

\ticksout

\def\nogridlines{%
  \def\!gridlinestatus{\!gridlinestoofalse}}
\nogridlines

\def\loggedticks{%
  \def\!setticktransform{\let\!ticktransform=\!logten}}
\def\unloggedticks{%
  \def\!setticktransform{\let\!ticktransform=\!donothing}}
\def\!donothing#1#2{\def#2{#1}}
\unloggedticks

\expandafter\def\csname !ticks/\endcsname{%
  \!not {\ifx \!LTlist\empty}
    \!placetickvalues
  \fi
  \def\!tickvalueslist{}%
  \def\!LTlist{}%
  \expandafter\csname !axis/\endcsname}

\def\!maketick{%
  \setbox\!boxA=\hbox{%
    \beginpicture
      \!setdimenmode
      \setcoordinatesystem point at {\!zpt} {\!zpt}   
      \linethickness=\!tickwidth
      \ifdim\!ticklength>\!zpt
        \putrule from {\!zpt} {\!zpt} to
          {\!ticksinoutsign\!tickxsign\!ticklength}
          {\!ticksinoutsign\!tickysign\!ticklength}
      \fi
      \if!gridlinestoo
        \putrule from {\!zpt} {\!zpt} to
          {-\!tickxsign\!xaxislength} {-\!tickysign\!yaxislength}
      \fi
    \endpicturesave <\!Xsave,\!Ysave>}%
    \wd\!boxA=\!zpt}
  
\def\!ticksin{%
  \def\!ticksinoutsign{-}%
  \!maketick
  \!nextkeyword}

\def\!ticksout{%
  \def\!ticksinoutsign{+}%
  \!maketick
  \!nextkeyword}

\def\!tickslength<#1> {%
  \!ticklength=#1\relax
  \!maketick
  \!nextkeyword}

\def\!tickslong{%
  \!tickslength<\longticklength> }

\def\!ticksshort{%
  \!tickslength<\shortticklength> }

\def\!tickswidth<#1> {%
  \!tickwidth=#1\relax
  \!maketick
  \!nextkeyword}

\def\!ticksandacross{%
  \!gridlinestootrue
  \!maketick
  \!nextkeyword}

\def\!ticksbutnotacross{%
  \!gridlinestoofalse
  \!maketick
  \!nextkeyword}

\def\!tickslogged{%
  \let\!ticktransform=\!logten
  \!nextkeyword}

\def\!ticksunlogged{%
  \let\!ticktransform=\!donothing
  \!nextkeyword}

\def\!ticksunlabeled{%
  \!tickcase=0
  \!nextkeyword}

\def\!ticksnumbered{%
  \!tickcase=1
  \!nextkeyword}

\def\!tickswithvalues#1/ {%
  \edef\!tickvalueslist{#1! /}%
  \!tickcase=2
  \!nextkeyword}

\def\!ticksquantity#1 {%
  \ifnum #1>1
    \!updatetickoffset
    \!countA=#1\relax
    \advance \!countA -1
    \!ticklocationincr=\!axisLength
      \divide \!ticklocationincr \!countA
    \!ticklocation=\!axisstart
    \loop \!not{\ifdim \!ticklocation>\!axisend}
      \!placetick\!ticklocation
      \ifcase\!tickcase
          \relax 
        \or
          \relax 
        \or
          \expandafter\!gettickvaluefrom\!tickvalueslist
          \edef\!tickfield{{\the\!ticklocation}{\!value}}%
          \expandafter\!listaddon\expandafter{\!tickfield}\!LTlist%
      \fi
      \advance \!ticklocation \!ticklocationincr
    \repeat
  \fi
  \!nextkeyword}

\def\!ticksat#1 {%
  \!updatetickoffset
  \edef\!Loc{#1}%
  \if /\!Loc
    \def\next{\!nextkeyword}%
  \else
    \!ticksincommon
    \def\next{\!ticksat}%
  \fi
  \next}    
      
\def\!ticksfrom#1 to #2 by #3 {%
  \!updatetickoffset
  \edef\!arg{#3}%
  \expandafter\!separate\!arg\!nil
  \!scalefactor=1
  \expandafter\!countfigures\!arg/
  \edef\!arg{#1}%
  \!scaleup\!arg by\!scalefactor to\!countE
  \edef\!arg{#2}%
  \!scaleup\!arg by\!scalefactor to\!countF
  \edef\!arg{#3}%
  \!scaleup\!arg by\!scalefactor to\!countG
  \loop \!not{\ifnum\!countE>\!countF}
    \ifnum\!scalefactor=1
      \edef\!Loc{\the\!countE}%
    \else
      \!scaledown\!countE by\!scalefactor to\!Loc
    \fi
    \!ticksincommon
    \advance \!countE \!countG
  \repeat
  \!nextkeyword}

\def\!updatetickoffset{%
  \!dimenA=\!ticksinoutsign\!ticklength
  \ifdim \!dimenA>\!offset
    \!offset=\!dimenA
  \fi}

\def\!placetick#1{%
  \if!xswitch
    \!xpos=#1\relax
    \!ypos=\!axisylevel
  \else
    \!xpos=\!axisxlevel
    \!ypos=#1\relax
  \fi
  \advance\!xpos \!Xsave
  \advance\!ypos \!Ysave
  \kern\!xpos\raise\!ypos\copy\!boxA\kern-\!xpos
  \ignorespaces}

\def\!gettickvaluefrom#1 #2 /{%
  \edef\!value{#1}%
  \edef\!tickvalueslist{#2 /}%
  \ifx \!tickvalueslist\!endtickvaluelist
    \!tickcase=0
  \fi}
\def\!endtickvaluelist{! /}

\def\!ticksincommon{%
  \!ticktransform\!Loc\!t
  \!ticklocation=\!t\!!unit
  \advance\!ticklocation -\!!origin
  \!placetick\!ticklocation
  \ifcase\!tickcase
    \relax 
  \or 
    \ifdim\!ticklocation<-\!!origin
      \edef\!Loc{$\!Loc$}%
    \fi
    \edef\!tickfield{{\the\!ticklocation}{\!Loc}}%
    \expandafter\!listaddon\expandafter{\!tickfield}\!LTlist%
  \or 
    \expandafter\!gettickvaluefrom\!tickvalueslist
    \edef\!tickfield{{\the\!ticklocation}{\!value}}%
    \expandafter\!listaddon\expandafter{\!tickfield}\!LTlist%
  \fi}

\def\!separate#1\!nil{%
  \!ifnextchar{-}{\!!separate}{\!!!separate}#1\!nil}
\def\!!separate-#1\!nil{%
  \def\!sign{-}%
  \!!!!separate#1..\!nil}
\def\!!!separate#1\!nil{%
  \def\!sign{+}%
  \!!!!separate#1..\!nil}
\def\!!!!separate#1.#2.#3\!nil{%
  \def\!arg{#1}%
  \ifx\!arg\!empty
    \!countA=0
  \else
    \!countA=\!arg
  \fi
  \def\!arg{#2}%
  \ifx\!arg\!empty
    \!countB=0
  \else
    \!countB=\!arg
  \fi}
 
\def\!countfigures#1{%
  \if #1/%
    \def\!next{\ignorespaces}%
  \else
    \multiply\!scalefactor 10
    \def\!next{\!countfigures}%
  \fi
  \!next}

\def\!scaleup#1by#2to#3{%
  \expandafter\!separate#1\!nil
  \multiply\!countA #2\relax
  \advance\!countA \!countB
  \if -\!sign
    \!countA=-\!countA
  \fi
  #3=\!countA
  \ignorespaces}

\def\!scaledown#1by#2to#3{%
  \!countA=#1\relax
  \ifnum \!countA<0 
    \def\!sign{-}
    \!countA=-\!countA
  \else
    \def\!sign{}%
  \fi
  \!countB=\!countA
  \divide\!countB #2\relax
  \!countC=\!countB
    \multiply\!countC #2\relax
  \advance \!countA -\!countC
  \edef#3{\!sign\the\!countB.}
  \!countC=\!countA 
  \ifnum\!countC=0 
    \!countC=1
  \fi
  \multiply\!countC 10
  \!loop \ifnum #2>\!countC
    \edef#3{#3\!zero}%
    \multiply\!countC 10
  \repeat
  \edef#3{#3\the\!countA}
  \ignorespaces}

\def\!placetickvalues{%
  \advance\!offset \tickstovaluesleading
  \if!xswitch
    \setbox\!boxA=\hbox{%
      \def\\##1##2{%
        \!dimenput {##2} [B] (##1,\!axisylevel)}%
      \beginpicture 
        \!LTlist
      \endpicturesave <\!Xsave,\!Ysave>}%
    \!dimenA=\!axisylevel
      \advance\!dimenA -\!Ysave
      \advance\!dimenA \!tickysign\!offset
      \if -\!tickysign
        \advance\!dimenA -\ht\!boxA
      \else
        \advance\!dimenA  \dp\!boxA
      \fi
    \advance\!offset \ht\!boxA 
      \advance\!offset \dp\!boxA
    \!dimenput {\box\!boxA} [Bl] <\!Xsave,\!Ysave> (\!zpt,\!dimenA)
  \else
    \setbox\!boxA=\hbox{%
      \def\\##1##2{%
        \!dimenput {##2} [r] (\!axisxlevel,##1)}%
      \beginpicture 
        \!LTlist
      \endpicturesave <\!Xsave,\!Ysave>}%
    \!dimenA=\!axisxlevel
      \advance\!dimenA -\!Xsave
      \advance\!dimenA \!tickxsign\!offset
      \if -\!tickxsign
        \advance\!dimenA -\wd\!boxA
      \fi
    \advance\!offset \wd\!boxA
    \!dimenput {\box\!boxA} [Bl] <\!Xsave,\!Ysave> (\!dimenA,\!zpt)
  \fi}

\normalgraphs
\catcode`!=12 


 
\catcode`@=11 \catcode`!=11
  
\let\!pictexendpicture=\endpicture 
\let\!pictexframe=\frame
\let\!pictexlinethickness=\linethickness
\let\!pictexmultiput=\multiput
\let\!pictexput=\put

\def\beginpicture{%
  \setbox\!picbox=\hbox\bgroup%
  \let\endpicture=\!pictexendpicture
  \let\frame=\!pictexframe
  \let\linethickness=\!pictexlinethickness
  \let\multiput=\!pictexmultiput
  \let\put=\!pictexput
  \let\input=\@@input   
  \!xleft=\maxdimen  
  \!xright=-\maxdimen
  \!ybot=\maxdimen
  \!ytop=-\maxdimen}

\let\frame=\!latexframe

\let\pictexframe=\!pictexframe

\let\linethickness=\!latexlinethickness
\let\pictexlinethickness=\!pictexlinethickness

\let\\=\@normalcr
\catcode`@=12 \catcode`!=12

\newtheorem{thm}{Theorem}[section]
\newtheorem{lem}[thm]{Lemma}

\newtheorem{prop}[thm]{Proposition}

\newtheorem{rmk}[thm]{Remark}

\newtheorem{thm-con}[thm]{Theorem-Conjecture}
\numberwithin{equation}{section}

\theoremstyle{definition}

\newcommand{\x}{{\tt X}}
\newcommand{\f}{\Bbb F}

\newcommand{\ar}[2]{\arrow <4pt> [0.3, 0.67] from #1 to #2}

\begin{document}

\title[Linearized Polynomial Equations]{From $r$-Linearized Polynomial Equations to $r^m$-Linearized Polynomial Equations}

\author[Neranga Fernando]{Neranga Fernando}
\address{Department of Mathematics,
Northeastern University, Boston, MA 02115}
\email{w.fernando@neu.edu}

\author[Xiang-dong Hou]{Xiang-dong Hou}
\address{Department of Mathematics and Statistics,
University of South Florida, Tampa, FL 33620}
\email{xhou@usf.edu}

\begin{abstract}
Let $r$ be a prime power and $q=r^m$. For $0\le i\le m-1$, let $f_i\in\f_r[\x]$ be $q$-linearized and $a_i\in\f_q$. Assume that $z\in\overline\f_r$ satisfies the equation $\sum_{i=0}^{m-1}a_if_i(z)^{r^i}=0$, where $\sum_{i=0}^{m-1}a_if_i^{r_i}\in\f_q[\x]$ is an $r$-linearized polynomial. It is shown that $z$ satisfies a $q$-linearized polynomial equation with coefficients in $\f_r$. This result provides an explanation for numerous permutation polynomials previously obtained through computer search. 
\end{abstract}

\keywords{Determinant, finite field, linearized polynomial, permutation polynomial}

\subjclass[2010]{11T06, 11T55}

\maketitle

\section{Introduction}

Let $p$ be a prime and $\f_{q_1},\f_{q_2}\subset\overline\f_p$, where $\overline\f_p$ is the algebraic closure of $\f_p$. A $q_1$-linearized polynomial over $\f_{q_2}$ is a polynomial of the form 
\[
f=a_0\x^{q_1^0}+a_1\x^{q_1^1}+\cdots+a_n\x^{q_1^n}\in\f_{q_2}[\x].
\]
If $f\in\f_{q_2}[\x]$ is $q_1$-linearized and $g\in\f_{q_1}[\x]$ is $q_2$-linearized, then $f\circ g=g\circ f$. 

We are interested in the following question. Let $\f_r\subset \overline\f_p$ and $q=r^m$. Assume that $z\in\overline\f_p$ satisfies an equation 
\begin{equation}\label{1.1}
\sum_{i=0}^{m-1}a_if_i(z)^{r^i}=0,
\end{equation}
where $a_i\in\f_q$ and $f_i\in\f_r[\x]$ is $q$-linearized. Note that \eqref{1.1} is an $r$-linearized equation with coefficients in $\f_q$. Is it possible to derive from \eqref{1.1} a $q$-linearized equation with coefficients in $\f_r$? Such an equation indeed exists and will be given in Section 2; see \eqref{2.3}. The $q$-linearized equation \eqref{2.3} is concise in a determinant form. However, the expansion of the determinant invokes certain questions concerning permutations of $\Bbb Z_m$ and partitions of $m$; these related questions will also be discussed in Section 2. 

What is achieved is a transition from an $r$-linearized equation with coefficients in $\f_q$ to a $q$-linearized equation with coefficients in $\f_r$. We mention that for the applications in the present paper and other possible applications elsewhere, a $q$-linearized equation with coefficients in $\f_q$ would suffice. The advantage of a $q$-linearized polynomial $f\in\f_q[\x]$ over an $r$-linearized polynomial $g\in\f_q[\x]$ resides in a folklore in the study of finite fields. The conventional associate of $f$ is a polynomial in $\f_q[\x]$ from which information about the roots of $f$ can be easily extracted. On the other hand, the conventional associate of $g$ is a skew polynomial over $\f_q$ which is not as convenient to use as the counterpart of $f$. For more background of skew polynomials over finite fields, see \cite{Caruso-LeBorhne-arXiv1212.3582, Giesbrecht-JSC-1998}.

Let $f=\sum_{i=0}^na_i\x^{q^i}\in\overline\f_p[\x]$ be a $q$-linearized polynomial. The conventional associate of $f$ is the polynomial $\widetilde f=\sum_{i=0}^na_ix^i\in\overline\f_p[\x]$. The folklore mentioned above is the following theorem which can be derived from \cite[Theorems~ 3.62 and 3.65]{Lidl-Niederreiter-97}.

\begin{thm}\label{T1.1}
Let $f,g\in\f_q[\x]$ be $q$-linearized polynomials. Then $\text{\rm gcd}(f,g)$ is a $q$-linearized polynomial over $\f_q[\x]$ with $\widetilde{\text{\rm gcd}(f,g)}=\text{\rm gcd}(\widetilde f,\widetilde g)$. 
\end{thm}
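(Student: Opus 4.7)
My plan is to deduce the statement directly from the two cited Lidl--Niederreiter theorems. The conventional associate map $f\mapsto\widetilde f$ gives a ring isomorphism between the $\f_q$-algebra of $q$-linearized polynomials over $\f_q$---equipped with polynomial addition and symbolic multiplication $f\otimes g:=f\circ g$---and the ordinary polynomial ring $\f_q[\x]$; this is the content of Theorem~3.62 in \cite{Lidl-Niederreiter-97}, and the commutativity of $\otimes$ here is immediate from $b^{q^i}=b$ for $b\in\f_q$, so that $f\circ g=\sum_{i,j}a_ib_j\x^{q^{i+j}}=g\circ f$. Because $\f_q[\x]$ is a principal ideal domain, the monic gcd is well defined, and pulling it back through the isomorphism gives a well-defined \emph{symbolic} gcd $h$ of $f$ and $g$ satisfying $\widetilde h=\gcd(\widetilde f,\widetilde g)$.

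The second step is to identify this symbolic gcd with the ordinary polynomial gcd of $f$ and $g$. This is where Theorem~3.65 in \cite{Lidl-Niederreiter-97} enters: for two $q$-linearized polynomials over $\f_q$, symbolic divisibility coincides with ordinary polynomial divisibility. Hence the set of common symbolic divisors of $f$ and $g$ equals the set of their common divisors in $\f_q[\x]$, and the maximal elements under either divisibility order are the same polynomials up to scalar. After matching the monic normalizations on both sides one concludes $\gcd(f,g)=h$, which yields simultaneously both claims: $\gcd(f,g)$ is $q$-linearized, and $\widetilde{\gcd(f,g)}=\gcd(\widetilde f,\widetilde g)$.

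The only bookkeeping point---and essentially the only possible obstacle---is the compatibility of the normalizations. A $q$-linearized polynomial $f=\sum_{i=0}^na_i\x^{q^i}$ with $a_n=1$ has conventional associate $\widetilde f=\sum_{i=0}^na_ix^i$, which is monic of degree $n$. Thus ``monic of symbolic degree $n$'' on the linearized side corresponds precisely to ``monic of degree $n$'' on the associate side, so the natural monic normalizations transport faithfully under $\widetilde{\phantom f}$. With this identification in place, the two applications of the cited theorems combine cleanly to yield Theorem~\ref{T1.1}.
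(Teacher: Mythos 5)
The paper itself gives no argument for this theorem beyond the citation of Theorems~3.62 and 3.65 of Lidl--Niederreiter, so you are filling in the intended derivation, and your first step is correct: the associate map is a ring isomorphism from $(R_q,+,\circ)$ onto $(\f_q[\x],+,\cdot)$, so a symbolic gcd $h$ with $\widetilde h=\gcd(\widetilde f,\widetilde g)$ exists and is well defined. The gap is in your second step. Theorem~3.65 says that a \emph{$q$-linearized} polynomial $L_1$ divides $L$ symbolically if and only if it divides $L$ in the ordinary sense; it says nothing about ordinary divisors of $f$ and $g$ that are not $q$-linearized. Your sentence ``the set of common symbolic divisors of $f$ and $g$ equals the set of their common divisors in $\f_q[\x]$'' is therefore false: for $f=g=\x^q-\x$ the ordinary common divisors include $\x-1$, which is not $q$-linearized. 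Since the set of ordinary common divisors is strictly larger, its maximal element could a priori have larger degree than $h$, and the assertion that ``the maximal elements under either divisibility order are the same'' is precisely the content of the theorem rather than a consequence of what you have established. What Theorem~3.65 actually yields is one inclusion only: $h$ divides both $f$ and $g$ in the ordinary sense, hence $h\mid\gcd(f,g)$.

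The missing half, $\gcd(f,g)\mid h$, requires a separate (standard) argument. For instance, write $\gcd(\widetilde f,\widetilde g)=a\widetilde f+b\widetilde g$ by B\'ezout in $\f_q[\x]$ and pull back through the isomorphism to obtain $h=A\circ f+B\circ g$, where $A=\sum_i\alpha_i\x^{q^i}$ and $B=\sum_j\beta_j\x^{q^j}$ are $q$-linearized over $\f_q$. Since $A\circ f=\sum_i\alpha_i f^{q^i}$ and $B\circ g=\sum_j\beta_j g^{q^j}$, every ordinary common divisor of $f$ and $g$ divides $A\circ f+B\circ g=h$; in particular $\gcd(f,g)\mid h$. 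Combining the two divisibilities and matching monic normalizations (which, as you correctly observe, transport faithfully under the associate map) gives $\gcd(f,g)=h$, proving both claims at once. With this insertion your argument is complete; without it, the central assertion that the ordinary gcd is $q$-linearized has not been proved.
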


In Section 3, we use the result of Section 2 to answer certain questions arising from a recent study of permutation polynomials \cite{Fernando-Hou-Lappano-FFA-2013,Hou-FFA-2012}. For each integer $n\ge 0$, let $g_{n,q}\in\f_p[\x]$ be the polynomial defined by the functional equation $\sum_{c\in\f_q}(\x+c)^n=g_{n,q}(\x^q-\x)$. The study in \cite{Fernando-Hou-Lappano-FFA-2013,Hou-FFA-2012} aims at the determination of the triples $(n,e;q)$ of positive integers for which $g_{n,q}$ is a permutation polynomial (PP) of $\f_{q^e}$. When $n$ is related to $q$ and $e$ in certain ways, the result of Section 2 combined with Theorem~\ref{T1.1} produces sufficient conditions for $g_{n,q}$ to be a PP of $\f_{q^e}$. These conditions explain numerous findings from the computer searches in \cite{Fernando-Hou-Lappano-FFA-2013}.


\section{$r$-Linearized and $r^m$-Linearized Equations}

Let $p$ be a prime, $\f_r\subset\overline\f_p$ and $q=r^m$. Let $R_q$ denote the set of all $q$-linearized polynomials over $\f_q$. $(R_q,+,\circ)$ is a commutative ring, where $+$ is the ordinary addition and $\circ$ is the composition. The mapping that sends $f\in R_q$ to its conventional associate $\widetilde f$ is a ring isomorphism from $R_q$ to $\f_q[\x]$. Throughout the paper, the $i$-fold composition of a polynomial $f$ is denoted by $f^{[i]}$ while $f^i$ stands for the $i$th power of $f$.

Assume that for $0\le i\le m-1$, $a_i\in\f_q$ and $f_i\in\f_r[\x]$ is $q$-linearized. Define
\begin{equation}\label{2.1}
M=\left[
\begin{matrix}
a_0f_0& a_1f_1&\cdots& a_{m-1}f_{m-1}\cr
a_{m-1}^rf_{m-1}\circ\x^q& a_0^rf_0&\cdots&a_{m-2}^rf_{m-2}\cr
\vdots&\vdots&&\vdots\cr
a_1^{r^{m-1}}f_1\circ\x^q& a_2^{r^{m-1}}f_2\circ\x^q&\cdots&a_0^{r^{m-1}}f_0
\end{matrix}\right]\in{\rm M}_{n\times n}(R_q).
\end{equation}

\begin{thm}\label{T2.1}
In the above notation, assume that $z\in\overline\f_p$ satisfies the equation
\begin{equation}\label{2.2}
\sum_{i=0}^{m-1}a_if_i^{r^i}(z)=0.
\end{equation}
Then we have
\begin{equation}\label{2.3}
(\det M)(z)=0,
\end{equation}
where $\det M$ is a $q$-linearized polynomial over $\f_r$.
\end{thm}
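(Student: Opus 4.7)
The idea is to exhibit a nonzero vector annihilated by $M$ (acting entrywise) and then transfer the vanishing to $\det M$ via the adjugate identity in $M_m(R_q)$; the fact that the coefficients of $\det M$ lie in $\f_r$ will then follow from a $\phi$-equivariance argument, where $\phi$ is the Frobenius acting on coefficients.

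First, set $v=(z,z^r,\ldots,z^{r^{m-1}})^T$, and let $A\in M_m(R_q)$ act on column vectors by $(A\cdot v)_k=\sum_j A_{kj}(v_j)$, where $A_{kj}(\,\cdot\,)$ denotes polynomial evaluation. Because each entry of $A$ is an additive polynomial, this action is compatible with matrix multiplication: $(AB)\cdot v=A\cdot(B\cdot v)$. Using $f_i(\zeta^{r^\ell})=f_i(\zeta)^{r^\ell}$ (which holds since $f_i\in\f_r[\x]$) and the factor $\circ\x^q$ to absorb the wrap-around when the column index $j$ is less than the row index $k$, a direct computation gives
\[
(Mv)_k=\sum_{i=0}^{m-1} a_i^{r^k} f_i(z)^{r^{k+i}}=\Bigl(\sum_{i=0}^{m-1} a_i f_i(z)^{r^i}\Bigr)^{r^k}=0,\qquad k=0,\ldots,m-1.
\]
Since $R_q$ is commutative, the classical identity $\operatorname{adj}(M)\,M=(\det M)\,I_m$ holds in $M_m(R_q)$. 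Applying both sides to $v$ and using the compatibility of the action yields $(\det M)(v_k)=0$ for every $k$; in particular $(\det M)(z)=0$.

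It remains to show $\det M\in\f_r[\x]$. Let $\phi\colon R_q\to R_q$ be the ring automorphism that raises every coefficient to the $r$-th power; its fixed subring is precisely the set of $q$-linearized polynomials with coefficients in $\f_r$. It is enough to prove $\det\phi(M)=\det M$. Let $P$ be the cyclic permutation matrix such that $(PMP^T)_{k,j}=M_{k+1\bmod m,\,j+1\bmod m}$. An entry-by-entry comparison (using $a^{r^m}=a$ for $a\in\f_q$) shows that $\phi(M)$ and $PMP^T$ coincide at every position, except that for $k<m-1$ one has $(PMP^T)_{k,m-1}=\phi(M)_{k,m-1}\circ\x^q$, while for $j<m-1$ one has $\phi(M)_{m-1,j}=(PMP^T)_{m-1,j}\circ\x^q$. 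Composing the last column of $\phi(M)$ with $\x^q$ therefore produces the same matrix as composing the last row of $PMP^T$ with $\x^q$, so taking determinants gives
\[
\x^q\cdot\det\phi(M)=\x^q\cdot\det(PMP^T)=\x^q\cdot\det M,
\]
where the last equality uses $(\det P)^2=1$. Since $R_q\cong\f_q[\x]$ is an integral domain, cancelling $\x^q$ yields $\phi(\det M)=\det M$, and thus $\det M\in\f_r[\x]$.

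The conceptual ingredients (adjugate identity, $\phi$-invariance of the determinant) are standard; the main obstacle is the bookkeeping in the last step, namely tracking exactly when and on which side the factor $\x^q$ appears in the entries of $\phi(M)$ versus those of $PMP^T$, so that the discrepancy is precisely a uniform $\x^q$-scaling of one row and of one column—after which cancellation in the integral domain $R_q$ closes the argument.
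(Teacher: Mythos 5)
Your argument is correct. The first half is essentially the paper's proof in slightly different packaging: the paper raises \eqref{2.2} to the powers $r^j$ and contracts the resulting columns against the cofactors $D_0,\dots,D_{m-1}$ of the $0$th column (so that the $j\ge 1$ terms vanish as determinants with a repeated column), which is exactly one row of your adjugate identity $\operatorname{adj}(M)\,M=(\det M)I_m$ applied to $v=(z,z^r,\dots,z^{r^{m-1}})^T$; your version just uses the full identity and is, if anything, cleaner. Where you genuinely diverge is in proving $\det M\in\f_r[\x]$. The paper expands $\det M$ over $\mathrm{Sym}(\Bbb Z_m)$, groups the terms by the multiset $\{\sigma(i)-i\}$, and shows each coefficient $c_{\mu_0\cdots\mu_{m-1}}$ in \eqref{2.11} is Frobenius-fixed by conjugating with the shift $\alpha(i)=i-1$; this costs some combinatorial bookkeeping but produces the explicit expansion \eqref{2.10}, which the paper then uses to write out $\det M$ for $m\le 5$. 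Your route instead compares $\phi(M)$ with $PMP^T$ at the matrix level and observes that the discrepancy is a uniform right-composition with $\x^q$ on the last row of one matrix versus the last column of the other, so that $\x^q\cdot\det\phi(M)=\x^q\cdot\det M$ and cancellation in the integral domain $R_q\cong\f_q[\x]$ finishes the job; I checked the entrywise comparison (including the corner entry $(m-1,m-1)$ and the use of $a^{r^m}=a$) and it is right. Your argument is shorter and conceptually tidier, but it only establishes invariance of $\det M$ under the coefficient Frobenius and does not yield the structured expansion indexed by $\frak M$ that the paper needs downstream; the two arguments are therefore complementary rather than interchangeable in the context of Section 2.
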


\begin{proof}
Raise the left side of \eqref{2.2} to the power of $r^j$, $0\le j\le m-1$, and express the results in a matrix form. We have
\begin{equation}\label{2.4}
\Bigl(\sum_{j=0}^{m-1}\left[
\begin{matrix}
a_jf_j^{r^j}\cr
a_{j-1}^rf_{j-1}^{r^j}\cr
\vdots\cr
a_0^{r^j}f_0^{r^j}\cr
a_{m-1}^{r^{j+1}}f_{m-1}^{r^j}\circ\x^q\cr
\vdots\cr
a_{j+1}^{r^{m-1}}f_{j+1}^{r^j}\circ\x^q
\end{matrix}\right]\Bigr)(z)=0.
\end{equation}
Label the rows and columns of $M$ from $0$ through $m-1$. Let $M_0$ be the submatrix $M$ with its $0$th column deleted, and, for $0\le i\le m-1$, let $M_{i,0}$ be the submatrix of $M_0$ with its $i$th row deleted. Put $D_i=(-1)^i\det M_{i,0}$, $0\le i\le m-1$. Then for each $1\le j\le m-1$, 
\begin{equation}\label{2.5}
\begin{split}
&\Bigl([D_0,\cdots,D_{m-1}]\circ \left[
\begin{matrix}
a_jf_j^{r^j}\cr
\vdots\cr
a_0^{r^j}f_0^{r^j}\cr
a_{m-1}^{r^{j+1}}f_{m-1}^{r^j}\circ\x^q\cr
\vdots\cr
a_{j+1}^{r^{m-1}}f_{j+1}^{r^j}\circ\x^q
\end{matrix}\right]\Bigr)(z)\cr
=\,&\Bigl([D_0,\cdots,D_{m-1}]\circ \left[
\begin{matrix}
a_jf_j\cr
\vdots\cr
a_0^{r^j}f_0\cr
a_{m-1}^{r^{j+1}}f_{m-1}\circ\x^q\cr
\vdots\cr
a_{j+1}^{r^{m-1}}f_{j+1}\circ\x^q
\end{matrix}\right]\Bigr)(z^{r^j})\cr
=\,&\Bigl(\det\left.\left[
\begin{matrix}
a_jf_j\cr
\vdots\cr
a_0^{r^j}f_0\cr
a_{m-1}^{r^{j+1}}f_{m-1}\circ\x^q\cr
\vdots\cr
a_{j+1}^{r^{m-1}}f_{j+1}\circ\x^q
\end{matrix}\ \right|\ M_0\ \right]\Bigr)(z^{r^j})=0.
\end{split}
\end{equation}
Combining \eqref{2.4} and \eqref{2.5} gives
\[
0=\Bigl([D_0,\cdots,D_{m-1}]\circ \left[
\begin{matrix}
a_0f_0\cr
a_{m-1}^rf_{m-1}\circ\x^q\cr
\vdots\cr
a_1^{r^{m-1}}f_1\circ\x^q
\end{matrix}\right]\Bigr)(z)=(\det M)(z).
\]
The claim that the coefficients of $\det M$ are all in $\f_r$ will be proved shortly; see \eqref{2.11} and \eqref{2.12}.
\end{proof}

For $i,j\in\{0,\dots,m-1\}$, define
\[
\delta(i,j)=
\begin{cases}
0&\text{if}\ i\le j,\cr
m&\text{if}\ i>j.
\end{cases}
\]
The $(i,j)$ entry of $M$ is 
\[
a_{-i+j}^{r^i}f_{-i+j}\circ\x^{r^{\delta(i,j)}},
\]
where the subscripts are taken modulo $m$. Let $\text{Sym}(\Bbb Z_m)$ denote the group of all permutations of $\Bbb Z_m$. We have
\begin{equation}\label{2.6}
\det M=\sum_{\sigma\in\text{Sym}(\Bbb Z_m)}(-1)^{\text{sgn}(\sigma)}\Bigl(\prod_{i=0}^{m-1}a_{\sigma(i)-i}^{r^i}f_{\sigma(i)-i}\Bigr)\circ\x^{r^{\sum_{i=0}^{m-1}\delta(i,\sigma(i))}}.
\end{equation}
Let $0^{\mu_0}1^{\mu_1}\cdots(m-1)^{\mu_{m-1}}$ denote the multiset consisting elements $0,\dots,m-1$ with respective multiplicities $\mu_0,\dots,\mu_{m-1}$. If there exists $\sigma\in\text{Sym}(\Bbb Z_m)$ such that $\{\sigma(i)-i:i\in\Bbb Z_m\}= 0^{\mu_0}1^{\mu_1}\cdots(m-1)^{\mu_{m-1}}$, then we must have $\sum_{i=0}^{m-1}i\mu_i\equiv 0\pmod m$. (In fact, the converse is also true; see Remark~\ref{R2.2}.) Let 
\begin{equation}\label{2.7}
\frak M=\Bigl\{(\mu_0,\mu_1,\dots,\mu_{m-1}):\mu_i\in\Bbb Z,\ \mu_i\ge 0,\ \sum_{i=0}^{m-1}\mu_i=m,\ \sum_{i=0}^{m-1}i\mu_i\equiv 0\pmod m\Bigr\},
\end{equation}
and for each $(\mu_0,\dots,\mu_{m-1})\in\frak M$, let
\begin{equation}\label{2.8}
\frak S_{(\mu_0,\dots,\mu_{m-1})}=\bigl\{\sigma\in\text{Sym}(\Bbb Z_m):\{\sigma(i)-i:i\in\Bbb Z_m\}=0^{\mu_0}\cdots(m-1)^{\mu_{m-1}}\bigr\}.
\end{equation}
Let $\sigma\in \frak S_{(\mu_0,\dots,\mu_{m-1})}$ and $i\in\{0,\dots,m-1\}$. Then $\delta(i,\sigma(i))+\sigma(i)-i$ is the integer in $\{0,\dots,m-1\}$ that is $\equiv\sigma(i)-i\pmod m$. Hence in $\Bbb Z$, we have
\begin{equation}\label{2.9}
\sum_{i=0}^{m-1}\delta(i,\sigma(i))=\sum_{i=0}^{m-1}\bigl[\delta(i,\sigma(i))+\sigma(i)-i\bigr]=\sum_{i=0}^{m-1}i\mu_i.
\end{equation}
Thus we can rewrite \eqref{2.6} as
\begin{equation}\label{2.10}
\det M=\sum_{(\mu_0,\dots,\mu_{m-1})\in\frak M}c_{\mu_0\dots\mu_{m-1}}f_0^{[\mu_0]}\circ\cdots\circ f_{m-1}^{[\mu_{m-1}]}\circ\x^{r^{0\mu_0+\cdots+(m-1)\mu_{m-1}}},
\end{equation}
where
\begin{equation}\label{2.11}
c_{\mu_0\dots\mu_{m-1}}=\sum_{\sigma\in\frak S_{(\mu_0,\dots,\mu_{m-1})}}(-1)^{\text{sgn}(\sigma)}\prod_{i=0}^{m-1}a_i^{\sum_{j,\,\sigma(j)-j\equiv i}r^j}.
\end{equation}
Let $\alpha\in\text{Sym}(\Bbb Z_m)$ be defined by $\alpha(i)=i-1$. We have $\alpha^{-1}\frak S_{(\mu_0,\dots,\mu_{m-1})}\alpha=\frak S_{(\mu_0,\dots,\mu_{m-1})}$. Moreover, for $\sigma\in \frak S_{(\mu_0,\dots,\mu_{m-1})}$,
\begin{equation}\label{2.12}
\Bigl(\prod_{i=0}^{m-1}a_i^{\sum_{j,\,\sigma(j)-j\equiv i}r^j}\Bigr)^r=\prod_{i=0}^{m-1}a_i^{\sum_{j,\,(\alpha^{-1}\sigma\alpha)(j)-j\equiv i}r^j}.
\end{equation}
\eqref{2.11} and \eqref{2.12} imply that $c_{\mu_0\cdots\mu_{m-1}}^r=c_{\mu_0\cdots\mu_{m-1}}$, i.e., $c_{\mu_0\cdots\mu_{m-1}}\in\f_r$.

\begin{rmk}\label{R2.2}\rm 
In fact, $\frak S_{(\mu_0,\dots,\mu_{m-1})}\ne\emptyset$ for all $(\mu_0,\dots,\mu_{m-1})\in\frak M$. This follows immediately from the following result by M. Hall.
\end{rmk}

\begin{thm}[\cite{Hall-PAMS-1952}]\label{T2.3}
Let $A$ be a finite abelian group and $f:A\to A$ a function. $f$ can be represented as a difference of two permutations of $A$ if and only if $\sum_{x\in A}f(x)=0$.
\end{thm}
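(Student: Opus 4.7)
The forward direction is immediate: if $f(x)=\pi(x)-\tau(x)$ for $\pi,\tau\in\text{Sym}(A)$, then
\[
\sum_{x\in A}f(x)=\sum_x\pi(x)-\sum_x\tau(x)=0,
\]
since both inner sums equal $\sum_{y\in A}y$.

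The substance of the theorem is the converse. Setting $r_a=|f^{-1}(a)|$, the hypothesis translates to $\sum_{a\in A}r_a\,a=0$ in $A$. Finding $(\pi,\tau)$ with $\pi-\tau=f$ is equivalent to finding a single $\sigma\in\text{Sym}(A)$ whose displacement multiset $\{\sigma(y)-y:y\in A\}$ coincides with $\{f(x):x\in A\}$: given such a $\sigma$, choose $\tau$ arbitrarily and set $\pi=\sigma\circ\tau$. Decomposing the edges of the complete bipartite graph $K_{A,A}$ into the perfect matchings $M_c=\{(y,y-c):y\in A\}$, indexed by $c\in A$, the task becomes a selection problem: pick exactly $r_c$ edges from $M_c$ for each $c$, so that the union is a perfect matching of $K_{A,A}$.

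My plan is to establish the existence of such a selection via Hall's marriage theorem, applied to an auxiliary bipartite graph whose vertices encode on one side the entries of the multiset $\{f(x)\}$ and on the other the edges of $K_{A,A}$ of the appropriate color. The sum-zero hypothesis $\sum_a r_a\,a=0$ should enter precisely in the verification of Hall's condition. The main difficulty I anticipate is that we face two simultaneous permutation constraints (distinctness of the $\pi$-coordinates \emph{and} of the $\tau$-coordinates), whereas a single invocation of Hall's theorem secures only one bijectivity condition. I expect the cleanest resolution to be an inductive or augmenting-path construction: start from a partial assignment respecting the edge-count budget and extend it by controlled swaps, using $\sum_a r_a\,a=0$ to rule out obstructions when two partial cycles would otherwise fail to close—in close analogy with completion results for Latin rectangles, where a sum-zero condition on the row/column content is exactly what makes the completion go through.
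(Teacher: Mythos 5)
Your attempt proves only the trivial direction. The forward implication (two lines of telescoping) is fine, and your reduction of the converse to finding a single permutation $\sigma$ whose displacement multiset $\{\sigma(y)-y:y\in A\}$ equals the multiset $\{f(x):x\in A\}$ is essentially correct --- though note that once such a $\sigma$ is found, $\tau$ cannot be chosen \emph{arbitrarily}: you must pick $\tau$ to be a bijection sending each $x$ to some $y$ with $\sigma(y)-y=f(x)$, which exists exactly because the two multisets agree. But from that point on you have a plan, not a proof. You correctly identify that a single invocation of the marriage theorem cannot enforce both bijectivity constraints at once, and then defer the resolution to an unspecified ``inductive or augmenting-path construction'' in which the hypothesis $\sum_a r_a\,a=0$ ``should'' rule out obstructions. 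That deferred step is the entire content of the theorem. Indeed, for a naive bipartite selection formulation Hall's marriage condition is insensitive to the sum-zero hypothesis: already for $A=\Bbb Z_2$ and the displacement multiset $\{0,1\}$ (which violates the sum condition and admits no realizing permutation), no local marriage-type obstruction appears --- the failure is a global parity/sum obstruction, which is precisely why the theorem requires a genuinely different argument.

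For comparison, the paper does not prove this statement at all; it is quoted from M.~Hall's 1952 paper, whose proof proceeds by induction on the group order (treating cyclic groups of prime power order first and then assembling the general case via the structure of finite abelian groups), with the sum-zero condition entering as the exact obstruction to closing the final cycle of a partially constructed permutation. Your intuition about where the hypothesis is used is consistent with that proof, but until the inductive construction is actually carried out and the ``controlled swaps'' are shown to always succeed, the converse direction remains unproved.
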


We conclude this section with the explicit expansion of $\det M$ for $1\le m\le 5$. For simplicity, we write $f_{i_0}\circ\cdots\circ f_{i_{m-1}}\circ \x^{i_0+\cdots+i_{m-1}}$ as $f_{i_0\dots i_{m-1}}$.

$m=1$,
\[
\det M=a_0f_0.
\]
 
$m=2$,
\[
\det M=\text{N}_{r^2/r}(a_0)f_{00}-\text{N}_{r^2/r}(a_1)f_{11}.
\]

$m=3$,
\[
\det M=\text{N}_{r^3/r}(a_0)f_{000}+\text{N}_{r^3/r}(a_1)f_{111}+\text{N}_{r^3/r}(a_2)f_{222}-\text{Tr}_{r^3/r}(a_0a_1^ra_2^{r^2})f_{012}.
\]

$m=4$,
\[
\begin{split}
&\det M=\cr
&\text{N}_{r^4/r}(a_0)f_{0000}-\text{N}_{r^4/r}(a_1)f_{1111}+\text{N}_{r^4/r}(a_2)f_{2222}-\text{N}_{r^4/r}(a_3)f_{3333}\cr
&-\text{Tr}_{r^2/r}\bigl(\text{N}_{r^4/r^2}(a_0)\text{N}_{r^4/r^2}(a_2)^r\bigr)f_{0022}+\text{Tr}_{r^2/r}\bigl(\text{N}_{r^4/r^2}(a_1)\text{N}_{r^4/r^2}(a_3)^r\bigr)f_{1133}\cr
&-\text{Tr}_{r^4/r}(a_0^{1+r}a_1^{r^2}a_3^{r^2})f_{0013}+\text{Tr}_{r^4/r}(a_0a_1^{r+r^2}a_2^{r^3})f_{0112}\cr
&+\text{Tr}_{r^4/r}(a_0a_2^ra_3^{r^2+r^3})f_{0233}-\text{Tr}_{r^4/r}(a_1a_2^{r+r^2}a_3^{r^3})f_{1223}.
\end{split}
\]

$m=5$,
\[
\begin{split}
&\det M=\cr
&\text{N}_{r^5/r}(a_0)f_{00000}-\text{Tr}_{r^5/r}(a_0^{1+r+r^2}a_1^{r^3}a_4^{r^4})f_{00014}\cr
&-\text{Tr}_{r^5/r}(a_0^{1+r+r^3}a_2^{r^2}a_3^{r^4})f_{00023}+\text{Tr}_{r^5/r}(a_0^{1+r}a_1^{r^2+r^3}a_3^{r^4})f_{00113}\cr
&+\text{Tr}_{r^5/r}(a_0^{1+r^2}a_1^{r^3}a_2^{r+r^4})f_{00122}+\text{Tr}_{r^5/r}(a_0^{1+r}a_2^{r^2}a_4^{r^3+r^4})f_{00244}\cr
&+\text{Tr}_{r^5/r}(a_0^{1+r^2}a_3^{r+r^3}a_4^{r^4})f_{00334}-\text{Tr}_{r^5/r}(a_0a_1^{r+r^2+r^3}a_2^{r^4})f_{01112}\cr
&+\text{Tr}_{r^5/r}(a_0a_1^{r+r^3}a_4^{r^2+r^4})f_{01144}\cr
&+\text{Tr}_{r^5/r}(-a_0a_1^ra_2^{r^2}a_3^{r^3}a_4^{r^4}+a_0a_1^{r^2}a_2^{r^4}a_3^{r}a_4^{r^3}-a_0a_1^{r^3}a_2^{r}a_3^{r^4}a_4^{r^2})f_{01234}\cr
&-\text{Tr}_{r^5/r}(a_0a_1^{r^2}a_3^{r+r^3+r^4})f_{01333}-\text{Tr}_{r^5/r}(a_0a_2^{r+r^2+r^4}a_4^{r^3})f_{02224}\cr
&+\text{Tr}_{r^5/r}(a_0a_2^{r+r^2}a_3^{r^3+r^4})f_{02233}-\text{Tr}_{r^5/r}(a_0a_3^{r}a_4^{r^2+r^3+r^4})f_{03444}\cr
&+\text{N}_{r^5/r}(a_1)f_{11111}-\text{Tr}_{r^5/r}(a_1^{1+r+r^3}a_3^{r^2}a_4^{r^4})f_{11134}\cr
&+\text{Tr}_{r^5/r}(a_1^{1+r}a_2^{r^2+r^3}a_4^{r^4})f_{11224}+\text{Tr}_{r^5/r}(a_1^{1+r^2}a_2^{r^3}a_3^{r+r^4})f_{11233}\cr
&-\text{Tr}_{r^5/r}(a_1a_2^{r+r^2+r^3}a_3^{r^4})f_{12223}-\text{Tr}_{r^5/r}(a_1a_2^{r^2}a_4^{r+r^3+r^4})f_{12444}\cr
&+\text{Tr}_{r^5/r}(a_1a_3^{r+r^2}a_4^{r^3+r^4})f_{13344}+\text{N}_{r^5/r}(a_2)f_{22222}\cr
&+\text{Tr}_{r^5/r}(a_2^{1+r^2}a_3^{r^3}a_4^{r+r^4})f_{22344}-\text{Tr}_{r^5/r}(a_2a_3^{r+r^2+r^3}a_4^{r^4})f_{23334}\cr
&+\text{N}_{r^5/r}(a_3)f_{33333}+\text{N}_{r^5/r}(a_4)f_{44444}.
\end{split}
\]


\section{Applications to Permutation Polynomials}

\subsection{A criterion}\

\begin{prop}\label{P3.1}
Let $m$ and $e$ be positive integers, $r$ a prime power and $q=r^m$. Define $S_e=\x^{q^0}+\x^{q^1}+\cdots+\x^{q^{e-1}}$. A polynomial $f\in\f_{q^e}[\x]$ is a PP of $\f_{q^e}$ if the following conditions are all satisfied.
\begin{itemize}
  \item [(i)] There exists a PP $\bar f\in\f_q[\x]$ of $\f_q$ such that the diagram
\[
\beginpicture
\setcoordinatesystem units <3mm,3mm> point at 0 0

\ar{1 0}{5 0}
\ar{1 6}{5 6}
\ar{0 5}{0 1}
\ar{6 5}{6 1}
\put {$\f_{q^e}$} at 0 6
\put {$\kern1mm\f_{q^e}$} at 6 6
\put {$\f_q$} at 0 0
\put {$\f_q$} at 6 0
\put {$\scriptstyle f$} [b] at 3 6.2 
\put {$\scriptstyle \bar f$} [t] at 3 -0.2 
\put {$\scriptstyle S_e$} [r] at -0.2 3
\put {$\scriptstyle S_e$} [l] at 6.2 3
\endpicture
\]  
commutes.

\item[(ii)] For each $c\in\f_q$, there exist $q$-linearized polynomials $f_{c,i}\in\f_r[\x]$ and $a_{c,i}\in\f_q$, $0\le i\le m-1$, and $b_c\in\f_{q^e}$ such that
\begin{equation}\label{3.1}
f(x)=f_c(x)+b_c\quad\text{for all}\ x\in S_e^{-1}(c),
\end{equation}
where 
\begin{equation}\label{3.2}
f_c=\sum_{i=0}^{m-1}a_{c,i}f_{c,i}^{r^i}.
\end{equation}

\item[(iii)] For each $c\in\f_q$,
\begin{equation}\label{3.3}
\text{\rm gcd}\bigl(\det A_c,\ (\x^e-1)/(\x-1)\bigr)=1,
\end{equation}
where
\begin{equation}\label{3.4}
A_c=\left[
\begin{matrix}
a_{c,0}\widetilde f_{c,0} & a_{c,1}\widetilde f_{c,1} &\cdots& a_{c,m-1}\widetilde f_{c,m-1}\cr
a_{c,m-1}^r\widetilde f_{c,m-1}\x & a_{c,0}^r\widetilde f_{c,0} &\cdots& a_{c,m-2}^r\widetilde f_{c,m-2}\cr
\vdots&\vdots&&\vdots\cr
a_{c,1}^{r^{m-1}}\widetilde f_{c,1}\x & a_{c,2}^{r^{m-1}}\widetilde f_{c,2}\x &\cdots& a_{c,0}^{r^{m-1}}\widetilde f_{c,0}
\end{matrix}\right],
\end{equation}
\end{itemize}
and $\widetilde{(\ )}$ denotes the conventional associate of a $q$-linearized polynomial over $\f_q$.
\end{prop}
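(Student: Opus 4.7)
The plan is to reduce the statement ``$f$ is a PP of $\f_{q^e}$'' to fiberwise injectivity using (i), then convert the fiberwise hypothesis into a $q$-linearized equation over $\f_r$ using (ii) and Theorem~\ref{T2.1}, and finally forbid non-trivial solutions using Theorem~\ref{T1.1} together with (iii).

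First I would observe that $S_e$ is the trace from $\f_{q^e}$ to $\f_q$, so its fibers $S_e^{-1}(c)$ partition $\f_{q^e}$ into $q$ cosets of $\ker S_e$, each of size $q^{e-1}$. The commutativity in (i) gives $S_e(f(x))=\bar f(S_e(x))$, so $f$ sends $S_e^{-1}(c)$ into $S_e^{-1}(\bar f(c))$; since $\bar f$ permutes $\f_q$, these target fibers are distinct as $c$ varies over $\f_q$. Consequently, $f$ is a PP of $\f_{q^e}$ if and only if $f|_{S_e^{-1}(c)}$ is injective for every $c\in\f_q$.

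Fix $c$ and suppose $x,y\in S_e^{-1}(c)$ with $f(x)=f(y)$; set $z=x-y$. By (3.1) and the additivity of the $r$-linearized polynomial $f_c$, we have $f_c(z)=0$; also $S_e(z)=0$. It therefore suffices to prove $\ker f_c\cap\ker S_e=\{0\}$ for every $c$. From $\sum_{i=0}^{m-1}a_{c,i}f_{c,i}(z)^{r^i}=0$, Theorem~\ref{T2.1} yields $(\det M_c)(z)=0$, where $M_c$ is the matrix (2.1) built from the $a_{c,i}$ and $f_{c,i}$, and $\det M_c$ is a $q$-linearized polynomial over $\f_r$. Thus $z\in\ker(\det M_c)\cap\ker S_e$.

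Finally, because the conventional associate map $\widetilde{(\ )}:R_q\to\f_q[\x]$ is a ring isomorphism, applying it entrywise to $M_c$ produces the matrix $A_c$ of (3.4), whence $\widetilde{\det M_c}=\det A_c$. Also $\widetilde{S_e}=1+\x+\cdots+\x^{e-1}=(\x^e-1)/(\x-1)$. By Theorem~\ref{T1.1} and (iii),
\[
\widetilde{\gcd(\det M_c,S_e)}=\gcd\bigl(\det A_c,\,(\x^e-1)/(\x-1)\bigr)=1,
\]
so $\gcd(\det M_c,S_e)=\x$ in $R_q$, a $q$-linearized polynomial whose only root is $0$. Hence $z=0$, and the fiberwise injectivity follows. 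The only conceptually non-routine step is recognizing that Theorem~\ref{T2.1} is precisely the device needed to exchange the $r$-linearized equation $f_c(z)=0$ for a $q$-linearized equation over $\f_r$, after which Theorem~\ref{T1.1} reduces the question of common roots of two $q$-linearized polynomials to a gcd of their conventional associates. The rest—the fiber decomposition via the trace, additivity of $f_c$, and the identification $\widetilde{\det M_c}=\det A_c$—is bookkeeping.
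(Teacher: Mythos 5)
Your proof is correct and follows essentially the same route as the paper's: reduce to fiberwise injectivity on the fibers of $S_e$, use Theorem~\ref{T2.1} to pass from $f_c(z)=0$ to $(\det M_c)(z)=0$, and conclude from Theorem~\ref{T1.1} together with condition (iii) that the only common root of $\det M_c$ and $S_e$ is $0$. The only cosmetic difference is that you justify the fiberwise reduction directly by counting cosets of $\ker S_e$, whereas the paper simply cites \cite[Lemma~1.2]{Akbary-Ghioca-Wang-FFA-2011}.
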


\begin{proof}
By \cite[Lemma~1.2]{Akbary-Ghioca-Wang-FFA-2011}, it suffices to show that for every $c\in\f_q$, $f_c$ is 1-1 on $S_e^{-1}(c)$. Since $f_c$ is a linearized polynomial, it suffices to show that $0$ is the only common root of $f_c$ and $S_e$. By Theorem~\ref{T2.1}, a root of $f_c$ is also a root of $\det M_c$, where
\begin{equation}\label{3.5}
M_c=\left[
\begin{matrix}
a_{c,0}f_{c,0} & a_{c,1}f_{c,1} &\cdots& a_{c,m-1}f_{c,m-1}\cr
a_{c,m-1}^rf_{c,m-1}\circ\x^q & a_{c,0}^rf_{c,0} &\cdots& a_{c,m-2}^rf_{c,m-2}\cr
\vdots&\vdots&&\vdots\cr
a_{c,1}^{r^{m-1}}f_{c,1}\circ\x^q & a_{c,2}^{r^{m-1}}f_{c,2}\circ\x^q &\cdots& a_{c,0}^{r^{m-1}}f_{c,0}
\end{matrix}\right].
\end{equation}  
Therefore, it suffices to show that $\text{gcd}(\det M_c,\,S_e)=\x$. We have
\[
\text{gcd}(\widetilde{\det M_c},\ \widetilde S_e)=\text{gcd}\bigl(\det A_c,\ (\x^e-1)/(\x-1)\bigr)=1,
\]
and by Theorem~\ref{T1.1}, $\text{gcd}(\det M_c,\,S_e)=\x$.
\end{proof}

\subsection{The polynomial $g_{n,q}$}\

Let $p=\text{char}\,\f_q$. For each integer $n\ge 0$, there is a polynomial $g_{n,q}\in\f_p[\x]$ defined by the functional equation
\begin{equation}\label{3.6}
\sum_{c\in\f_q}(\x+c)^n=g_{n,q}(\x^q-\x).
\end{equation}
The polynomial $g_{n,q}$ was introduced in \cite{Hou-JCTA-2011}, and its permutation property was studied in \cite{Fernando-Hou-Lappano-FFA-2013,Hou-FFA-2012}. The objective is to determine the triples $(n,e;q)$ of positive integers for which $g_{n,q}$ is a PP of $\f_{q^e}$, and this question, as a whole, appears to be difficult. For each integer $a\ge 0$, define $S_a=\x+\x^q+\cdots+\x^{q^{a-1}}$. The polynomial $g_{n,q}$ is related to $S_a$ by the following lemma.

\begin{lem}\label{L3.2}
\begin{itemize}
  \item [(i)] {\rm (\cite[Eq.~(3.5)]{Fernando-Hou-Lappano-FFA-2013})} For integers $n\ge 0$ and $a\ge b\ge 0$, we have
\begin{equation}\label{3.7}
g_{n+q^a,q}-g_{n+q^b,q}=(S_a-S_b)\cdot g_{n,q}.
\end{equation}

\item[(ii)] {\rm (\cite[Lemma~2.2]{Fernando-Hou-Lappano-DM-2014})} Let $n=1+q^{a_1}+\cdots+q^{a_{q+t}}$, where $-1\le t\le q-4$ and $a_1,\dots,a_{q+t}\ge 0$. Then
\begin{equation}\label{3.8}
g_{n,q}=-\sum_{1\le i_1<\cdots<i_{t+2}\le q+t}S_{a_{i_1}}\cdots S_{a_{i_{t+2}}}.
\end{equation}
\end{itemize}
\end{lem}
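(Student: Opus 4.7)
The plan is to derive both identities directly from the defining functional equation \eqref{3.6}.

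For part (i), the observation that $c^{q^a} = c$ for every $c \in \f_q$ yields $(\x+c)^{q^a} = \x^{q^a}+c$, and hence
\[
\sum_{c\in\f_q}(\x+c)^{n+q^a} - \sum_{c\in\f_q}(\x+c)^{n+q^b} = (\x^{q^a}-\x^{q^b}) \sum_{c\in\f_q}(\x+c)^n,
\]
since the $c$'s coming from $(\x^{q^a}+c)-(\x^{q^b}+c)$ cancel. A telescoping sum gives $S_a(\x^q-\x)=\x^{q^a}-\x$, and likewise for $b$, so the right side is $\bigl(S_a-S_b\bigr)(\x^q-\x)\cdot g_{n,q}(\x^q-\x)$. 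Because $\x^q-\x$ attains infinitely many values in $\overline{\f_p}$, this promotes to the polynomial identity $g_{n+q^a,q}-g_{n+q^b,q} = (S_a-S_b)\cdot g_{n,q}$.

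For part (ii), I would expand
\[
(\x+c)^n = (\x+c)\prod_{i=1}^{q+t}(\x^{q^{a_i}}+c)
\]
as a polynomial in $c$ of degree $q+t+1 < 2(q-1)$. Since $\sum_{c\in\f_q} c^k$ vanishes unless $(q-1)\mid k$ and $k>0$, the hypothesis $t\le q-4$ forces only the $c^{q-1}$ coefficient to contribute (with a sign of $-1$). Extracting this coefficient by cases on whether $(\x+c)$ contributes $\x$ or $c$ gives
\[
\sum_{c\in\f_q}(\x+c)^n = -\bigl(\x\cdot e_{t+1}+e_{t+2}\bigr),
\]
where $e_k = e_k(y_1,\dots,y_{q+t})$, with $y_i = \x^{q^{a_i}}$, denotes the $k$th elementary symmetric polynomial.

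The remaining step is to check that $\x\cdot e_{t+1}+e_{t+2}$ equals $e_{t+2}(y_1-\x,\dots,y_{q+t}-\x)$, which is exactly the expansion of $\sum_{i_1<\cdots<i_{t+2}} S_{a_{i_1}}\cdots S_{a_{i_{t+2}}}$ evaluated at $\x^q-\x$ via $S_{a_i}(\x^q-\x)=y_i-\x$. Expanding the right-hand side yields
\[
\sum_{k=0}^{t+2}(-\x)^{t+2-k}\binom{q+t-k}{t+2-k}\,e_k.
\]
The $k=t+2$ term contributes $e_{t+2}$; the $k=t+1$ term contributes $-(q-1)\x\cdot e_{t+1} = \x\cdot e_{t+1}$ in characteristic $p$; and for $k\le t$, writing $j=t+2-k\ge 2$, two applications of Lucas's theorem yield $\binom{q+j-2}{j}\equiv\binom{j-2}{j}\equiv 0\pmod p$ (the latter vanishing already as an integer). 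I expect this last mod-$p$ binomial collapse to be the main obstacle, and the role of the bound $t\le q-4$ is precisely to ensure both that no higher-order coefficients $c^{k(q-1)}$ with $k\ge 2$ enter and that all cross terms from the expansion vanish.
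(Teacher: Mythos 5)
This lemma is not proved in the paper at all: both parts are imported verbatim, with citations, from \cite[Eq.~(3.5)]{Fernando-Hou-Lappano-FFA-2013} and \cite[Lemma~2.2]{Fernando-Hou-Lappano-DM-2014}, so there is no in-paper argument to compare yours against. Your self-contained derivation is correct. For (i), the chain $(\x+c)^{q^a}=\x^{q^a}+c$, the telescoping identity $S_a(\x^q-\x)=\x^{q^a}-\x$, and the injectivity of substitution by the non-constant polynomial $\x^q-\x$ (equivalently, agreement on the infinite value set of $\x^q-\x$ over $\overline\f_p$) give exactly \eqref{3.7}. For (ii), all the key steps check out: the degree in $c$ is $q+t+1\le 2q-3<2(q-1)$ precisely because $t\le q-4$, so $\sum_{c\in\f_q}c^k=-1$ only for $k=q-1$ and the coefficient of $c^{q-1}$ is $e_{t+2}(\x,y_1,\dots,y_{q+t})=\x e_{t+1}+e_{t+2}$; the expansion $e_{t+2}(y_1-\x,\dots,y_{q+t}-\x)=\sum_{k}(-\x)^{t+2-k}\binom{q+t-k}{t+2-k}e_k$ is right; and the collapse $\binom{q+j-2}{j}\equiv 0\pmod p$ for $2\le j\le t+2$ is valid by Lucas, since $j-2<q$ guarantees that $q+j-2$ has base-$p$ digits equal to those of $j-2$ below the $\log_p q$ position while $j>j-2$ forces some digit of $j$ to exceed the corresponding digit of $j-2$. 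The only presentational point worth adding is an explicit remark that $j\le t+2\le q-2$ is what keeps $j-2$ and $j$ below $q$ so that the digit comparison works; this is covered by your hypotheses and is not a gap.
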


\subsection{Applications to $g_{n,q}$}\

We call a triple of positive integers $(n,e;q)$ {\em desirable} if $g_{n,q}$ is a PP of $\f_{q^e}$. Computer searches for desirable triples with small values of $q$ and $e$ were conducted in \cite{Fernando-Hou-Lappano-FFA-2013,Hou-FFA-2012}; the results for $q=4$ and $e\le 6$ were given in \cite[Table~3]{Fernando-Hou-Lappano-FFA-2013}. Proposition~\ref{P3.1} provides an explanation for several entries of \cite[Table~3]{Fernando-Hou-Lappano-FFA-2013}; in each case, a new class of PPs is discovered.

\begin{prop}\label{P3.3}
Let $q=4$ and $n=1+q^a+q^b+q^e+q^{e+k}$, where $a,b,e,$ and $k$ are positive integers. Then 
\begin{equation}\label{3.9}
g_{n,q}\equiv S_aS_b+(S_a+S_b+S_e)S_k+S_e^2\pmod{\x^{q^e}-\x}.
\end{equation}
If $\text{\rm gcd}(e,2k)=1$ and $a=k$ or $b=k$, then $g_{n,q}$ is a PP of $\f_{q^e}$.
\end{prop}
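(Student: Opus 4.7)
The plan is to derive the congruence \eqref{3.9} via Lemma~\ref{L3.2}(ii). Writing $n=1+q^{a_1}+q^{a_2}+q^{a_3}+q^{a_4}$ with $(a_1,a_2,a_3,a_4)=(a,b,e,e+k)$, we have $q+t=4$, i.e., $t=0$, which satisfies $-1\le t\le q-4=0$ for $q=4$. Lemma~\ref{L3.2}(ii) then writes $g_{n,q}$ as a sum (with sign, immaterial in characteristic~$2$) of the six products $S_{a_i}S_{a_j}$ over unordered pairs $\{i,j\}$. I would then reduce modulo $\x^{q^e}-\x$ using the identity $\x^{q^{e+i}}-\x^{q^i}=(\x^{q^e}-\x)^{q^i}$, which yields $S_{e+k}\equiv S_e+S_k$. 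Substituting this into the six-term sum and cancelling pairs of equal terms in characteristic~$2$ collapses the expression to \eqref{3.9}.

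For the second assertion, by the symmetry of $a$ and $b$ in \eqref{3.9} I may assume $a=k$. Substituting and using $2S_bS_k=0$ in characteristic~$2$ gives
\[
g_{n,q}\equiv f:=S_k^2+S_kS_e+S_e^2\pmod{\x^{q^e}-\x},
\]
so it suffices to show that $f$ is a PP of $\f_{q^e}$. I plan to apply Proposition~\ref{P3.1} with $r=2$, $m=2$. For $c\in\f_q$ and $x\in S_e^{-1}(c)$ one has $S_e(x)=c$, whence
\[
f(x)=S_k(x)^2+c\,S_k(x)+c^2.
\]
This matches \eqref{3.1}--\eqref{3.2} with $b_c=c^2$, $a_{c,0}=c$, $a_{c,1}=1$, and $f_{c,0}=f_{c,1}=S_k\in\f_r[\x]$.

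To verify condition~(i) of Proposition~\ref{P3.1}, I would use that $S_e(S_k(x))=k\,S_e(x)$ on $\f_{q^e}$ (each residue class modulo~$e$ occurs $k$ times in the double sum defining $S_e\circ S_k$). Combined with $S_e(y^2)=S_e(y)^2$ (valid in characteristic~$2$) and $S_e(c^2)=ec^2$, this gives $S_e(f(x))=(k^2+k+e)S_e(x)^2$. Since $k^2+k$ is even, the coefficient reduces to $e\bmod 2$, so $\bar f(y)=y^2$ is a PP of $\f_q$ precisely when $e$ is odd. For condition~(iii), direct expansion of the $2\times 2$ determinant gives
\[
\det A_c=(\x+c^3)\widetilde{S_k}^2.
\]
When $c=0$ this is $\x\,\widetilde{S_k}^2$, and coprimality with $(\x^e-1)/(\x-1)$ reduces, via $\gcd(\x^k-1,\x^e-1)=\x^{\gcd(k,e)}-1$, to $\gcd(k,e)=1$. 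When $c\in\f_q^*=\f_4^*$, I would exploit that $c^3=1$, so $\det A_c=(\x+1)\widetilde{S_k}^2$; since $(\x^e-1)/(\x-1)$ evaluated at $\x=1$ equals $e$, which is non-zero in $\f_2$ for $e$ odd, condition~(iii) holds. The two requirements ``$e$ odd'' and ``$\gcd(k,e)=1$'' together amount to $\gcd(e,2k)=1$.

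The argument is largely mechanical once the framework is in place, and the main obstacle is bookkeeping. The delicate points are two simplifications specific to $q=4$: in characteristic~$2$ the parity $k^2+k\equiv 0$ eliminates $k$ from condition~(i), while $c^3=1$ for $c\in\f_4^*$ (because $|\f_4^*|=3$) collapses condition~(iii) for non-zero $c$ to a single evaluation at $\x=1$. Missing either simplification would leave residual arithmetic conditions inconsistent with the hypothesis $\gcd(e,2k)=1$.
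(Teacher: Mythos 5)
Your proposal is correct and follows essentially the same route as the paper: Lemma~\ref{L3.2}(ii) with $t=0$ for the congruence \eqref{3.9}, reduction (by symmetry in $a,b$) to $f=S_k^2+S_eS_k+S_e^2$, and Proposition~\ref{P3.1} with the identical data $\bar f=\x^2$, $f_{c,0}=f_{c,1}=S_k$, $a_{c,0}=c$, $a_{c,1}=1$, $b_c=c^2$, and $\det A_c=(\x+c^3)\widetilde{S_k}^2$. You supply more detail than the paper on conditions (i) and (iii); just note that for $c\ne 0$ the factor $\widetilde{S_k}^2$ still needs the coprimality argument you gave in the $c=0$ case, not only the evaluation at $\x=1$.
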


\begin{proof}
Eq.~\eqref{3.9} follows from Lemma~\ref{L3.2} easily. It remains to prove the second claim. Since $a=k$ or $b=k$, \eqref{3.9} gives
\[
g_{n,q}\equiv S_k^2+S_e^2+S_eS_k\pmod{\x^{q^e}-\x}.
\]
We show that conditions (i) -- (iii) in Proposition~\ref{P3.1} are satisfied with $r=2$, $m=2$, $q=4$ and $f=S_k^2+S_e^2+S_eS_k$. Condition (i) is satisfied with $\bar f=\x^2$. For each $c\in\f_q$ and $x\in S_e^{-1}(c)$,
\[
f(x)=cS_k(x)+S_k^2(x)+c^2.
\]
Hence (ii) is satisfied with
\[
f_{c,0}=f_{c,1}=S_k,\ a_{c,0}=c,\ a_{c,1}=1,\ b_c=c^2.
\]
We have 
\[
\det A_c=\left|\begin{matrix} c(1+\x+\cdots+\x^{k-1})&1+\x+\cdots+\x^{k-1}\cr \x(1+\x+\cdots+\x^{k-1})& c^2(1+\x+\cdots+\x^{k-1})\end{matrix}\right|=\frac{(\x^{2k}+1)(\x+c^3)}{(\x+1)^2}.
\]
Since $\text{gcd}(e,2k)=1$, $\text{gcd}(\det A_c,\ (\x^e-1)/(\x-1))=1$, and hence (iii) is also satisfied.
\end{proof}

The proofs of the next three propositions are similar to that of Proposition~\ref{P3.3}. For these proofs, we only give $f$, $f_{c,i}$, $a_{c,i}$, $b_c$ and $\det A_c$ and leave the details for the reader.

\begin{prop}\label{P3.4}
Let $q=4$ and $n=1+3q^a+q^e+2q^{e+a}$, where $e$ and $a$ are positive integers. Then
\[
g_{n,q}\equiv\x^{q^a}+ S_e+S_a^2S_e^2+S_aS_e^3\pmod{\x^{q^e}-\x}.
\]
If $2\mid e$ and $\text{\rm gcd}(e,2a+1)=1$, then $g_{n,q}$ is a PP of $\f_{q^e}$.
\end{prop}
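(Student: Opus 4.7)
The plan is to apply Proposition~\ref{P3.1} with $r=2$, $m=2$, and $q=4$, in the spirit of Proposition~\ref{P3.3}. The first step is to establish the displayed congruence, which follows from iterating the recursion of Lemma~\ref{L3.2}(i) and reducing exponents modulo $e$ via $\x^{q^e}\equiv\x$. Once that is in hand, the task is to verify conditions (i)--(iii) of Proposition~\ref{P3.1} for $f=\x^{q^a}+S_e+S_a^2S_e^2+S_aS_e^3$.

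For (i), I would take $\bar f=\x$. Writing $c=S_e(x)\in\f_q$ for $x\in\f_{q^e}$ and applying $S_e$ termwise, $S_e(x^{q^a})=c^{q^a}=c$, while $S_e(S_e(x))=S_e(c)=ec=0$ by the hypothesis $2\mid e$. The two mixed terms, after using $\f_q$-linearity of $S_e$, the commutativity $S_e\circ S_a=S_a\circ S_e$, and $S_e(y^2)=S_e(y)^2$ in characteristic~$2$, each reduce to $(a\bmod 2)\,c$ and so cancel. Hence $S_e\circ f=S_e$ on $\f_{q^e}$, and $\bar f=\x$ is (trivially) a PP of $\f_q$.

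For (ii), on $S_e^{-1}(c)$ the polynomial $f$ restricts to $x^{q^a}+c+c^3S_a(x)+c^2S_a(x)^2$. The anticipated obstacle is that the coefficient $c^3$ of the non-square summand $S_a(x)$ appears to live in $\f_q\setminus\f_r$, which would block the decomposition~\eqref{3.2}. The critical observation, special to $q=4$, is that $c^3\in\{0,1\}\subset\f_r$ for every $c\in\f_4$; in particular $\x^{q^a}+c^3S_a\in\f_r[\x]$. I would therefore take
\[
f_{c,0}=\x^{q^a}+c^3S_a,\quad a_{c,0}=1,\quad f_{c,1}=S_a,\quad a_{c,1}=c^2,\quad b_c=c.
\]

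For (iii), expanding $\det A_c$ using $\widetilde{S_a}=(\x^a-1)/(\x-1)$, $\widetilde f_{c,0}=\x^a+c^3\widetilde{S_a}$, $c^6=c^3$ in $\f_4$, and characteristic~$2$, produces
\[
\det A_c=\frac{\x^{2a+1}+(c^3+1)\x^{2a}+c^3}{\x-1},
\]
which equals $\x^{2a}$ when $c=0$ and $(\x^{2a+1}-1)/(\x-1)$ when $c\neq 0$. In the first case the gcd with $(\x^e-1)/(\x-1)$ is trivially~$1$; in the second, the identity $\gcd(\x^m-1,\x^n-1)=\x^{\gcd(m,n)}-1$ together with $\gcd(e,2a+1)=1$ yields $\gcd(\x^{2a+1}-1,\x^e-1)=\x-1$, from which $\gcd\bigl(\det A_c,(\x^e-1)/(\x-1)\bigr)=1$ follows. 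This establishes (iii) and completes the argument.
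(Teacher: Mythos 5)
Your proposal is correct and follows exactly the paper's route: the paper's own proof consists precisely of the data $\bar f=\x$, $f_{c,0}=\x^{q^a}+c^3S_a$, $f_{c,1}=S_a$, $a_{c,0}=1$, $a_{c,1}=c^2$, $b_c=c$, and $\det A_c=\x^{2a}+c^3(\x^{2a}+1)/(\x+1)$, which agrees with your $\bigl(\x^{2a+1}+(c^3+1)\x^{2a}+c^3\bigr)/(\x-1)$ after clearing the denominator in characteristic $2$. You have simply filled in the verifications of (i)--(iii) (including the key observation $c^3\in\f_2$) that the paper leaves to the reader.
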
  

\begin{proof}
$f=\x^{q^a}+ S_e+S_a^2S_e^2+S_aS_e^3$, $\bar f=\x$, $f_{c,0}=\x^{q^a}+c^3S_a$, $f_{c,1}=S_a$, $a_{c,0}=1$, $a_{c,1}=c^2$, $b_c=c$, 
\[
\det A_c=\x^{2a}+c^3\frac{\x^{2a}+1}{\x+1}.
\]
\end{proof} 

\begin{prop}\label{P3.5}
Let $q=4$ and $n=1+2q^1+2q^{e-1}+2q^{e+1}$, where $e>1$ is an integer. Then
\[
g_{n,q}\equiv S_2+\x^2 S_e^2+S_{e-1}^2S_e^2\pmod{\x^{q^e}-\x}.
\]
If $e$ is odd, then $g_{n,q}$ is a PP of $\f_{q^e}$.
\end{prop}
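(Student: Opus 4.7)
The plan is to first establish the stated congruence and then apply Proposition~\ref{P3.1} to $f=S_2+\x^2 S_e^2+S_{e-1}^2 S_e^2$ with $r=2$, $m=2$, $q=4$, following the template of Propositions~\ref{P3.3} and \ref{P3.4}. For the congruence, I would use $c^{q^k}=c$ for $c\in\f_4$ to write $(\x+c)^n=(\x+c)(\x^{2q}+c^2)(\x^{2q^{e-1}}+c^2)(\x^{2q^{e+1}}+c^2)$, sum over $c$ (retaining only the $c^3$ and $c^6$ terms, since $\sum_{c\in\f_4}c^k$ is nonzero only when $3\mid k$, $k>0$), and then invoke $\x^{q^{e+1}}=\x^q+(\x^{q^e}-\x)^q$ in characteristic $2$ to factor
\[
\sum_{c\in\f_q}(\x+c)^n-S_2(\x^q-\x)=(\x^{2q}+\x^{2q^{e-1}})(\x^{q^e}-\x)^{2q}.
\]
Writing both factors as polynomials in $y=\x^q-\x$ (using $\x^{2q}+\x^{2q^{e-1}}=y^2+S_{e-1}(y)^2$ and $\x^{q^e}-\x=S_e(y)$), this gives $g_{n,q}(y)=S_2(y)+(y^2+S_{e-1}(y)^2)S_e(y)^{2q}$ exactly. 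Since the telescoping identity $S_e(y)^q-S_e(y)=y^{q^e}-y$ gives $S_e(y)^{2q}\equiv S_e(y)^2\pmod{y^{q^e}-y}$, the stated congruence follows. This factorization step is the main obstacle, as it requires spotting the key algebraic identity $\x^{q^{e+1}}=\x^q+(\x^{q^e}-\x)^q$.

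To verify condition~(ii) of Proposition~\ref{P3.1}, for $x\in S_e^{-1}(c)$ I use $S_e(x)=c\in\f_q$ and $S_{e-1}(x)=c-x^{q^{e-1}}$, so in characteristic $2$, $S_{e-1}(x)^2=c^2+x^{2q^{e-1}}$. Substitution yields
\[
f(x)=S_2(x)+c^2(x+x^{q^{e-1}})^2+c^4,
\]
from which I read off $f_{c,0}=\x+\x^q$, $f_{c,1}=\x+\x^{q^{e-1}}$ (both $q$-linearized over $\f_2$), $a_{c,0}=1$, $a_{c,1}=c^2$, $b_c=c^4$. For condition~(i), applying $S_e$ and using $S_e(x^q)=S_e(x^{q^{e-1}})=c$, the first two summands each contribute $0$ under $S_e$ in characteristic $2$, while $S_e(c^4)=ec^4=c$ (since $e$ is odd and $c^4=c$ in $\f_4$). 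Thus $\bar f(c)=c$, the identity map on $\f_q$, which is certainly a PP.

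For condition~(iii), with $\widetilde f_{c,0}=1+\x$, $\widetilde f_{c,1}=1+\x^{e-1}$, and $a_{c,1}^r=c^4=c$, the $2\times 2$ determinant expands in characteristic $2$ to
\[
\det A_c=(1+\x)^2+c^3\x(1+\x^{e-1})^2=1+\x^2+c^3(\x+\x^{2e-1}).
\]
Since $e$ is odd, $(\x^e-1)/(\x-1)$ is separable over $\f_2$, and I check that no root $\zeta\ne 1$ of $\x^e-1$ is a root of $\det A_c$. For $c\ne 0$ (so $c^3=1$), using $\zeta^{2e-1}=\zeta^{-1}$ and multiplying by $\zeta$, the equation becomes $\zeta^3+\zeta^2+\zeta+1=(1+\zeta)^3=0$, forcing $\zeta=1$, a contradiction. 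For $c=0$, $\det A_0=(1+\x)^2$ has only $\x=1$ as a root, but $1$ is not a root of $(\x^e-1)/(\x-1)$ since the latter evaluates to $e\equiv 1\pmod 2$ at $\x=1$. Hence $\gcd(\det A_c,(\x^e-1)/(\x-1))=1$ in all cases, completing the verification.
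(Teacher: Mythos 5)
Your proof is correct and follows the paper's route: establish the congruence, then verify conditions (i)--(iii) of Proposition~\ref{P3.1} with $r=2$, $m=2$, $\bar f=\x$, $f_{c,0}=S_2$, $a_{c,0}=1$, $a_{c,1}=c^2$. The only (harmless) deviation is in condition (ii), where you take $f_{c,1}=\x+\x^{q^{e-1}}$ with $b_c=c$ while the paper takes $f_{c,1}=\x+S_{e-1}$ with $b_c=0$ (the constant $c^4=c$ being absorbed into $f_{c,1}$ via $S_{e-1}(x)=c+x^{q^{e-1}}$); this is why your $\det A_c=1+\x^2+c^3(\x+\x^{2e-1})$ differs from the paper's $\bigl(1+c^3\x^3+\x^4+c^3\x^{2e-1}\bigr)/(1+\x)^2$, but both determinants are correctly computed and both gcd checks go through for odd $e$.
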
  

\begin{proof}
$f=S_2+\x^2 S_e^2+S_{e-1}^2S_e^2$, $\bar f=\x$, $f_{c,0}=S_2$, $f_{c,1}=\x+S_{e-1}$, $a_{c,0}=1$, $a_{c,1}=c^2$, $b_c=0$, 
\[
\det A_c=\frac1{(1+\x)^2}(1+c^3\x^3+\x^4+c^3\x^{2e-1}).
\]
\end{proof} 

\begin{prop}\label{P3.6}
Let $q=4$ and $n=1+2q^1+q^3+q^e+2q^{e+1}$, where $e$ is a positive integer. Then
\[
g_{n,q}\equiv \x^{q^2}+S_e+\x^2S_e^2+S_3S_e^3\pmod{\x^{q^e}-\x}.
\]
If $2\mid e$ but $3\nmid e$, then $g_{n,q}$ is a PP of $\f_{q^e}$.
\end{prop}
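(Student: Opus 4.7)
The plan is to follow the template of Proposition~\ref{P3.3}: first establish the congruence $g_{n,q}\equiv \x^{q^2}+S_e+\x^2 S_e^2+S_3 S_e^3\pmod{\x^{q^e}-\x}$ using Lemma~\ref{L3.2}, and then verify hypotheses (i)--(iii) of Proposition~\ref{P3.1} with $r=2$, $m=2$, $q=4$, and $f$ equal to the right-hand side of the congruence. The congruence itself I would derive by iterated application of Lemma~\ref{L3.2}(i), together with the reductions $x^{q^e}\equiv x$ and $S_{e+1}\equiv\x+S_e$ valid modulo $\x^{q^e}-\x$, and characteristic-$2$ cancellations.

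For condition (i), let $c=S_e(x)\in\f_q$. Since $S_e$ is $\f_q$-linear, commutes with squaring in characteristic~$2$, and satisfies $S_e(x^{q^i})=S_e(x)=c$ for every $i$, one computes $S_e(x^{q^2})=c$, $S_e(S_e(x))=ec=0$ (as $2\mid e$), $S_e(x^2 S_e(x)^2)=c^2\cdot c^2=c$, and $S_e(S_3(x)S_e(x)^3)=c^3 S_3(c)=c^3\cdot c=c$; the sum is $c$, so $\bar f=\x$ is a PP of $\f_q$. For condition (ii), on $S_e^{-1}(c)$ the polynomial $f$ reduces to $x^{q^2}+c^2 x^2+c^3 S_3(x)+c$; since every $c\in\f_4$ satisfies $c^3\in\f_2$, the identity $S_3=S_2+\x^{q^2}$ allows the regrouping $f(x)=(1+c^3)x^{q^2}+c^3 S_2(x)+c^2 x^2+c$, so that
\[
a_{c,0}=1,\quad f_{c,0}=(1+c^3)\x^{q^2}+c^3 S_2,\quad a_{c,1}=c^2,\quad f_{c,1}=\x,\quad b_c=c
\]
yields a decomposition with $f_{c,0},f_{c,1}\in\f_2[\x]$ both $q$-linearized.

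For condition (iii), $\widetilde f_{c,0}=c^3+c^3\x+(1+c^3)\x^2$ and $\widetilde f_{c,1}=1$, and using $c^4=c$, $c^6=c^3$, and $(1+c^3)^2=1+c^3$ in characteristic~$2$ gives
\[
\det A_c=\widetilde f_{c,0}^{\,2}+c^3\x=(1+c^3)\x^4+c^3(1+\x+\x^2).
\]
For $c=0$ this is $\x^4$, automatically coprime to $(\x^e-1)/(\x-1)$ (whose constant term is $1$). For $c\ne 0$ (so $c^3=1$) this is $1+\x+\x^2$, whose roots are the primitive cube roots of unity; these are roots of $(\x^e-1)/(\x-1)$ precisely when $3\mid e$, so the hypothesis $3\nmid e$ gives the required coprimality for every $c$.

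The main obstacle I anticipate is the derivation of the congruence itself: the seven-term $q$-adic expansion of $n$ lies outside the range $t\le q-4=0$ of Lemma~\ref{L3.2}(ii), so iterated use of Lemma~\ref{L3.2}(i) with careful characteristic-$2$ bookkeeping is required. Once the congruence is in hand, the rest is the routine computation above.
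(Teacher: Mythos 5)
Your proposal matches the paper's proof exactly: your data $f_{c,0}=(1+c^3)\x^{q^2}+c^3S_2=\x^{q^2}+c^3S_3$, $f_{c,1}=\x$, $a_{c,0}=1$, $a_{c,1}=c^2$, $b_c=c$, and $\det A_c=(1+c^3)\x^4+c^3(1+\x+\x^2)$ coincide with the paper's (whose $\det A_c=\x^4+c^3(1+\x+\x^2+\x^4)$ is the same polynomial), and your verification of conditions (i)--(iii) of Proposition~\ref{P3.1} correctly supplies the details the paper leaves to the reader. The only step you leave unexecuted is the derivation of the congruence for $g_{n,q}$, which the paper likewise treats as a routine consequence of Lemma~\ref{L3.2}.
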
  

\begin{proof}
$f=\x^{q^2}+S_e+\x^2S_e^2+S_3S_e^3$, $\bar f=\x$, $f_{c,0}=\x^{q^2}+c^3S_3$, $f_{c,1}=\x$, $a_{c,0}=1$, $a_{c,1}=c^2$, $b_c=c$, $\det A_c=\x^4+c^3(1+\x+\x^2+\x^4)$.
\end{proof} 

\begin{prop}\label{P3.7}
Let $q=4$ and $n=1+2q^2+q^4+q^e+2q^{e+2}$, where $e$ is a positive integer. Then
\[
g_{n,q}\equiv \x^{q^3}+S_e+S_2^2S_e^2+S_4S_e^3 \pmod{\x^{q^e}-\x}.
\]
If $2\mid e$ but $5\nmid e$, then $g_{n,q}$ is a PP of $\f_{q^e}$.
\end{prop}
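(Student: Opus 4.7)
The plan follows the template of Propositions~\ref{P3.3}--\ref{P3.6}. The first step is to derive the displayed congruence for $g_{n,q}$ modulo $\x^{q^e}-\x$ from Lemma~\ref{L3.2}; the second is to apply Proposition~\ref{P3.1} with $r=2$, $m=2$, $q=4$ to $f=\x^{q^3}+S_e+S_2^2S_e^2+S_4S_e^3$, reading off $\bar f=\x$, $f_{c,0}=\x^{q^3}+c^3S_4$, $f_{c,1}=S_2$, $a_{c,0}=1$, $a_{c,1}=c^2$, $b_c=c$.

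For condition~(i), compute $S_e\circ f$ on $\f_{q^e}$ term by term. Since $S_e(x^{q^3})=S_e(x)$, $S_e(S_e(x))=eS_e(x)$, and each $S_e^i(x)\in\f_q$ is Frobenius-invariant, one has $S_e(S_2^2(x)S_e^2(x))=S_e^2(x)S_e(S_2^2(x))$ and analogously for the $S_4$ term. A telescoping in characteristic~$2$ using $x^{q^e}=x$ shows $S_e(S_2^2(x))=0$ on $\f_{q^e}$, and $S_e(S_4(x))=4S_e(x)=0$. Hence $S_e(f(x))=(1+e)S_e(x)$, which equals $S_e(x)$ precisely when $e$ is even, so $\bar f=\x$ works. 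Condition~(ii) is then immediate: $c^3\in\f_2$ for every $c\in\f_4$, so $f_{c,0},f_{c,1}\in\f_r[\x]$, and the rewriting $f(x)=(x^{q^3}+c^3S_4(x))+c^2S_2(x)^2+c$ for $x\in S_e^{-1}(c)$ exhibits the required decomposition with $b_c=c$.

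For condition~(iii), compute the conventional associates $\widetilde f_{c,0}=c^3+c^3\x+c^3\x^2+(1+c^3)\x^3$ and $\widetilde f_{c,1}=1+\x$. Then $\det A_c=\widetilde f_{c,0}^{\,2}+c^3\widetilde f_{c,1}^{\,2}\x$ in characteristic~$2$; using $c^6=c^3$ and $(1+c^3)^2=1+c^3$ for $c\in\f_4$, this simplifies to $\det A_c=\x^6+c^3(1+\x+\x^2+\x^3+\x^4+\x^6)$. When $c=0$ this is $\x^6$, automatically coprime to $(\x^e-1)/(\x-1)$. When $c\ne 0$ one has $c^3=1$, reducing $\det A_c$ to $(\x^5-1)/(\x-1)$; then $\gcd((\x^5-1)/(\x-1),(\x^e-1)/(\x-1))=(\x^{\gcd(5,e)}-1)/(\x-1)=1$ precisely when $5\nmid e$.

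The likely bottleneck is the first step: $n=1+2q^2+q^4+q^e+2q^{e+2}$ has six base-$q$ terms beyond the leading $1$, so Lemma~\ref{L3.2}(ii) does not apply directly for $q=4$. The remedy is to peel off the repeated powers $q^2$ and $q^{e+2}$ using Lemma~\ref{L3.2}(i), reducing to $m=1+q^2+q^4+q^e+q^{e+2}$, which is within reach of Lemma~\ref{L3.2}(ii) with $t=0$; then substitute $S_{e+2}\equiv S_e+S_2\pmod{\x^{q^e}-\x}$ and collect terms in characteristic~$2$. Once the congruence is established, conditions~(i)--(iii) of Proposition~\ref{P3.1} follow as above.
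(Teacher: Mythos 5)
Your choice of $f$, $\bar f$, $f_{c,0}=\x^{q^3}+c^3S_4$, $f_{c,1}=S_2$, $a_{c,0}=1$, $a_{c,1}=c^2$, $b_c=c$ coincides exactly with the data the paper supplies, and your verifications of conditions (i)--(iii) of Proposition~\ref{P3.1} are correct: the computation $S_e(f(x))=(1+e)S_e(x)$ (using $S_e\circ S_2=S_2\circ S_e$ and $S_e(x)\in\f_q$), the identity $f(x)=f_{c,0}(x)+c^2S_2(x)^2+c$ on $S_e^{-1}(c)$, and the evaluation $\det A_c=\widetilde f_{c,0}^{\,2}+c^3\widetilde f_{c,1}^{\,2}\x=\x^6+c^3(1+\x+\x^2+\x^3+\x^4+\x^6)$ all check out, as does the final gcd argument (for $c\ne 0$ the determinant is $1+\x+\x^2+\x^3+\x^4$, whose roots are the primitive fifth roots of unity, so coprimality with $(\x^e-1)/(\x-1)$ is exactly $5\nmid e$). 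This is the same route the paper takes; the paper simply lists these data and leaves the details to the reader.

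The genuine gap is in the step you yourself flag: the derivation of $g_{n,q}\equiv\x^{q^3}+S_e+S_2^2S_e^2+S_4S_e^3\pmod{\x^{q^e}-\x}$. Your proposed remedy --- ``peel off the repeated powers $q^2$ and $q^{e+2}$ using Lemma~\ref{L3.2}(i)'' --- does not work as described. Lemma~\ref{L3.2}(i) only expresses the \emph{difference} $g_{N+q^a,q}-g_{N+q^b,q}$ in terms of $g_{N,q}$; to isolate $g_{N+q^{e+2},q}$ you must know $g_{N+q^b,q}$ for some $b$, and every such auxiliary index has the same number of base-$q$ summands as the one you started with. Since $n$ has six powers of $q$ beyond the leading $1$ (i.e., $t=2$, while Lemma~\ref{L3.2}(ii) requires $t\le q-4=0$ for $q=4$), every chain of applications of (i) leaves you needing $g_{m,q}$ for some $m$ with five or six summands, which (ii) never reaches. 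The clean fix is to redo the computation behind Lemma~\ref{L3.2}(ii) for $k=6$: writing $n=q^{a_0}+\cdots+q^{a_6}$ with $(a_0,\dots,a_6)=(0,2,2,4,e,e+2,e+2)$ and expanding $\sum_{c\in\f_q}\prod_i(\x^{q^{a_i}}+c)$, the character sum picks out \emph{two} elementary symmetric functions, $\sigma_4$ and $\sigma_1$, not just one; substituting $\x^{q^{a_i}}=\x+S_{a_i}(\x^q-\x)$ and reducing binomial coefficients mod $2$ gives $g_{n,q}=\x+e_1(S_{a_1},\dots,S_{a_6})+e_4(S_{a_1},\dots,S_{a_6})=\x+S_4+S_e+S_2^2S_{e+2}^2+(S_2^2+S_{e+2}^2)S_4S_e$, and reducing $S_{e+2}\equiv S_e+S_2$ together with $\x+S_4+S_2^4=\x^{q^3}$ yields the stated congruence. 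With that replacement the rest of your argument stands.
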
  

\begin{proof}
$f=\x^{q^3}+S_e+S_2^2S_e^2+S_4S_e^3$, $\bar f=\x$, $f_{c,0}=\x^{q^3}+c^3S_4$, $f_{c,1}=S_2$, $a_{c,0}=1$, $a_{c,1}=c^2$, $b_c=c$, $\det A_c=\x^6+c^3(1+\x+\x^2+\x^3+\x^4+\x^6)$.
\end{proof} 

Examples of Propositions~\ref{P3.3} -- \ref{P3.7} provide explanations for several entries in \cite[Table 3]{Fernando-Hou-Lappano-FFA-2013}; an update of the table is included in the apendix of the present paper.

By Proposition~\ref{P3.3}:
\[
\begin{split}
&q=4,\ e=5,\ n=17429=1+q^1+q^2+q^5+q^7;\\
&q=4,\ e=5,\ n=17489=1+q^2+q^3+q^5+q^7;\\
&q=4,\ e=5,\ n=17681=1+q^2+q^4+q^5+q^7.
\end{split}
\]

By Proposition~\ref{P3.4}:
\[
\begin{split}
&q=4,\ e=4,\ n=2317=1+3q^1+q^4+2q^5;\\
&q=4,\ e=6,\ n=135217=1+3q^2+q^6+2q^8.
\end{split}
\]

By Proposition~\ref{P3.5}:
\[
q=4,\ e=5,\ n=8713=1+2q^1+2q^4+2q^6.
\]

By Proposition~\ref{P3.6}:
\[
q=4,\ e=4,\ n=2377=1+2q^1+q^3+q^4+2q^5.
\]

By Proposition~\ref{P3.7}:
\[
q=4,\ e=6,\ n=135457=1+2q^2+q^4+q^6+2q^8.
\]

\begin{rmk}\label{R3.8}\rm
The statements in the above propositions can be made more general when they are not restricted to the class $g_{n,q}$. For example, Propositions~\ref{P3.5} and \ref{P3.6} allow the following generalizations whose proofs require no additional work.

\begin{itemize}
  \item A generalization of Proposition~\ref{P3.5}: Let $q=4$ and $f=S_a+\x^2S_e^2+S_b^2S_e^2$, where $a$, $b$, and $e$ are positive integers. Then $f$ is a PP of $\f_{q^e}$ if $2\mid(a+b)$, $\text{gcd}(e,a)=1$, and 
\[
\text{gcd}\Bigl(\frac{\x^{2a}+1+\x^{2b+1}+\x^3}{(\x+1)^2},\, \frac{\x^e+1}{\x+1}\Bigr)=1.
\]
In this case, $\bar f=\x$, $f_{c,0}=S_a$, $f_{c,1}=\x+S_b$, $a_{c,0}=1$, $a_{c,1}=c^2$, $b_c=0$, and 
\[
\det A_c=\frac 1{(1+\x)^2}\bigl[\x^{2a}+1+c^3(\x^{2b+1}+\x^3)\bigr].
\]

\item A generalization of Proposition~\ref{P3.6}: Let $q=4$ and $f=S_a+S_b+S_e+\x^2S_e^2+S_bS_e^3$, where $a$, $b$, and $e$ are positive integers. Then $f$ is a PP of $\f_{q^e}$ if $2\mid(a+e)$, $\text{gcd}(e,a-b)=1$, and 
\[
\text{gcd}\Bigl(\frac{\x^{2a}+\x^3+\x+1}{(\x+1)^2},\ \frac{\x^e+1}{\x+1}\Bigr)=1.
\]
In this case, $\bar f=\x$, $f_{c,0}=S_a+(1+c^3)S_b$, $f_{c,1}=\x$, $a_{c,0}=1$, $a_{c,1}=c^2$, $b_c=c$, and 
\[
\det A_c=\frac 1{(1+\x)^2}\bigl[\x^{2a}+\x^{2b}+c^3(1+\x+\x^3+\x^{2b})\bigr].
\]
\end{itemize}
\end{rmk}


%
%
\clearpage

\section*{Appendix}

The following is an update of \cite[Table~3]{Fernando-Hou-Lappano-FFA-2013} of all desirable triples $(n,e;4)$ with $e\le 6$ and $w_4(n)>4$, where $w_4(n)$ is the base $4$ weight of $n$. Only a few entries remain to be explained.

\begin{table}[!h]
\caption{Desirable triples $(n,e;4)$, $e\le 6$, $w_4(n)>4$}\label{Tb3}
\vspace{-5mm}
\[
\begin{tabular}{|c|r|l|c|}
\hline
$e$ & $n \hfil$ & base $4$ digits of $n$ & reference \\ \hline
\hline

2&59&{3,2,3}&  \cite[Theorem 5.9 (ii)]{Fernando-Hou-Lappano-FFA-2013}    \\  \hline
2&127&{3,3,3,1}&  \cite[Proposition 3.1]{Hou-FFA-2012}  \\  \hline
3&29&{1,3,1}&  \cite[Example 6.3]{Fernando-Hou-Lappano-FFA-2013}    \\  \hline
3&101&{1,1,2,1}&   \cite[Theorem 6.12 ]{Fernando-Hou-Lappano-FFA-2013}   \\  \hline
3&149&{1,1,1,2}&     \\  \hline
3&163&{3,0,2,2}&   \cite[Theorem 6.10]{Fernando-Hou-Lappano-FFA-2013}    \\  \hline
3&281&{1,2,1,0,1}&  \cite[Corollary 6.16]{Fernando-Hou-Lappano-FFA-2013}  \\  \hline
3&307&{3,0,3,0,1}&   \cite[Theorem 1.1]{Hou13}  \\  \hline
3&329&{1,2,0,1,1}&  \cite[Example 6.4]{Fernando-Hou-Lappano-FFA-2013}    \\  \hline
3&341&{1,1,1,1,1}&  \cite[Example 6.4 ]{Fernando-Hou-Lappano-FFA-2013}    \\  \hline
3&2047&{3,3,3,3,3,1}&  \cite[Proposition 3.1]{Hou-FFA-2012}    \\  \hline
4&281&{1,2,1,0,1}&    \cite[Theorem 6.12]{Fernando-Hou-Lappano-FFA-2013}   \\  \hline
4&307&{3,0,3,0,1}&     \\  \hline
4&401&{1,0,1,2,1}&   \cite[Theorem 6.12]{Fernando-Hou-Lappano-FFA-2013}   \\  \hline
4&547&{3,0,2,0,2}&   \cite[Theorem 6.10]{Fernando-Hou-Lappano-FFA-2013}  \\  \hline
4&779&{3,2,0,0,3}&   \cite[Theorem 6.6]{Fernando-Hou-Lappano-FFA-2013}    \\  \hline
4&787&{3,0,1,0,3}&   \cite[Theorem 6.8]{Fernando-Hou-Lappano-FFA-2013}    \\  \hline
4&817&{1,0,3,0,3}&     \\  \hline
4&899&{3,0,0,2,3}&   \cite[Theorem 6.6]{Fernando-Hou-Lappano-FFA-2013}    \\  \hline
4&1469&{1,3,3,2,1,1}&     \\  \hline
4&2201&{1,2,1,2,0,2}&     \\  \hline
4&2317&{1,3,0,0,1,2}&   Proposition~\ref{P3.4}  \\  \hline
4&2321&{1,0,1,0,1,2}&   \cite[Theorem 6.12]{Fernando-Hou-Lappano-FFA-2013}   \\  \hline
4&2377&{1,2,0,1,1,2}&   Proposition~\ref{P3.6} \\  \hline
4&2441&{1,2,0,2,1,2}&     \\  \hline
4&4387&{3,0,2,0,1,0,1}&     \\  \hline
4&32767&{3,3,3,3,3,3,3,1}&  \cite[Proposition 3.1]{Hou-FFA-2012}  \\  \hline
5&29&{1,3,1}&   \cite[ Example 6.3]{Fernando-Hou-Lappano-FFA-2013}  \\  \hline
5&1049&{1,2,1,0,0,1}&   \cite[Theorem 6.12]{Fernando-Hou-Lappano-FFA-2013}    \\  \hline
5&1061&{1,1,2,0,0,1}&  \cite[Theorem 6.12]{Fernando-Hou-Lappano-FFA-2013}    \\  \hline
5&1169&{1,0,1,2,0,1}&  \cite[Theorem 6.12]{Fernando-Hou-Lappano-FFA-2013}   \\  \hline
5&1289&{1,2,0,0,1,1}&  \cite[Theorem 6.12]{Fernando-Hou-Lappano-FFA-2013}    \\  \hline
5&1409&{1,0,0,2,1,1}&  \cite[Theorem 6.12]{Fernando-Hou-Lappano-FFA-2013}    \\  \hline
5&1541&{1,1,0,0,2,1}&   \cite[Theorem 6.12]{Fernando-Hou-Lappano-FFA-2013}  \\  \hline
5&1601&{1,0,0,1,2,1}&  \cite[Theorem 6.12]{Fernando-Hou-Lappano-FFA-2013}   \\  \hline
5&2083&{3,0,2,0,0,2}&  \cite[Theorem 6.10]{Fernando-Hou-Lappano-FFA-2013}     \\  \hline
5&2563&{3,0,0,0,2,2}&   \cite[Theorem 6.9]{Fernando-Hou-Lappano-FFA-2013}    \\  \hline

\end{tabular}
\]
\end{table}

\clearpage

\addtocounter{table}{-1}
\begin{table}[!h]
\caption{continued}
\vspace{-5mm}
\[
\begin{tabular}{|c|r|l|c|}
\hline
$e$ & $n \hfil$ & base $4$ digits of $n$ & reference \\ \hline
\hline

5&4229&{1,1,0,2,0,0,1}&   \cite[Theorem 6.12]{Fernando-Hou-Lappano-FFA-2013}    \\  \hline
5&4289&{1,0,0,3,0,0,1}&     \\  \hline
5&4387&{3,0,2,0,1,0,1}&     \\  \hline
5&5129&{1,2,0,0,0,1,1}&   \cite[ Example 6.4]{Fernando-Hou-Lappano-FFA-2013}   \\  \hline
5&5141&{1,1,1,0,0,1,1}&   \cite[Example 6.4 ]{Fernando-Hou-Lappano-FFA-2013}   \\  \hline
5&5189&{1,1,0,1,0,1,1}&   \cite[Example 6.4]{Fernando-Hou-Lappano-FFA-2013}    \\  \hline
5&5249&{1,0,0,2,0,1,1}&   \cite[Theorem 6.12]{Fernando-Hou-Lappano-FFA-2013}   \\  \hline
5&5381&{1,1,0,0,1,1,1}&   \cite[Example 6.4]{Fernando-Hou-Lappano-FFA-2013}    \\  \hline
5&8713&{1,2,0,0,2,0,2}&   Proposition~\ref{P3.5} \\  \hline
5&9281&{1,0,0,1,0,1,2}&   \cite[ Theorem 6.12 ]{Fernando-Hou-Lappano-FFA-2013}  \\  \hline
5&17429&{1,1,1,0,0,1,0,1}&   Proposition~\ref{P3.3} \\  \hline
5&17441&{1,0,2,0,0,1,0,1}&   \cite[Theorem 6.12]{Fernando-Hou-Lappano-FFA-2013}    \\  \hline
5&17489&{1,0,1,1,0,1,0,1}&   Proposition~\ref{P3.3} \\  \hline
5&17681&{1,0,1,0,1,1,0,1}&   Proposition~\ref{P3.3} \\  \hline
5&524287&{3,3,3,3,3,3,3,3,3,1}&  \cite[Proposition 3.1]{Hou-FFA-2012}    \\  \hline

6&4361&{1,2,0,0,1,0,1}&   \cite[Theorem 6.12]{Fernando-Hou-Lappano-FFA-2013}   \\  \hline
6&6161&{1,0,1,0,0,2,1}&    \cite[Theorem 6.12]{Fernando-Hou-Lappano-FFA-2013}    \\  \hline
6&6401&{1,0,0,0,1,2,1}&    \cite[Theorem 6.12]{Fernando-Hou-Lappano-FFA-2013}   \\  \hline
6&8227&{3,0,2,0,0,0,2}&   \cite[Theorem 6.10]{Fernando-Hou-Lappano-FFA-2013}   \\  \hline
6&8707&{3,0,0,0,2,0,2}&   \cite[Theorem 6.11]{Fernando-Hou-Lappano-FFA-2013}   \\  \hline
6&12299&{3,2,0,0,0,0,3}&   \cite[Theorem 6.6]{Fernando-Hou-Lappano-FFA-2013}   \\  \hline
6&12307&{3,0,1,0,0,0,3}&   \cite[Theorem 6.8 ]{Fernando-Hou-Lappano-FFA-2013}   \\  \hline
6&14339&{3,0,0,0,0,2,3}&   \cite[Theorem 6.6]{Fernando-Hou-Lappano-FFA-2013}    \\  \hline
6&37121&{1,0,0,0,1,0,1,2}&   \cite[Theorem 6.12]{Fernando-Hou-Lappano-FFA-2013}    \\  \hline
6&65801&{1,2,0,0,1,0,0,0,1}&   \cite[Corollary 6.16]{Fernando-Hou-Lappano-FFA-2013}    \\  \hline
6&65921&{1,0,0,2,1,0,0,0,1}&     \\  \hline
6&66307&{3,0,0,0,3,0,0,0,1}&    \cite[Theorem 1.1]{Hou13}  \\  \hline
6&135209&{1,2,2,0,0,0,1,0,2}&     \\  \hline
6&135217&{1,0,3,0,0,0,1,0,2}&   Proposition~\ref{P3.4} \\  \hline
6&135457&{1,0,2,0,1,0,1,0,2}&  Proposition~\ref{P3.7} \\  \hline
6&137249&{1,0,2,0,0,2,1,0,2}&     \\  \hline
6&8388607&{3,3,3,3,3,3,3,3,3,3,3,1}&  \cite[Proposition 3.1]{Hou-FFA-2012}    \\  \hline

\end{tabular}
\]
\end{table}


\begin{thebibliography}{99}

\bibitem{Akbary-Ghioca-Wang-FFA-2011}
A. Akbary, D. Ghioca, Q. Wang, {\it On constructing permutations of finite fields}, Finite Fields Appl. {\bf 17} (2011), 51 -- 67.

\bibitem{Caruso-LeBorhne-arXiv1212.3582}
X. Caruso, J. Le Borgne
{\it Some algorithms for skew polynomials over finite fields}, arXiv1212.3582, 2012.


\bibitem{Fernando-Hou-Lappano-FFA-2013}
N. Fernando, X. Hou, S. D. Lappano, {\it A new approach to permutation polynomials over finite fields, II}, Finite Fields Appl. {\bf 22} (2013), 122 -- 158.

\bibitem{Fernando-Hou-Lappano-DM-2014}
N. Fernando, X. Hou, S. D. Lappano, {\it Permutation polynomials over finite fields involving ${\tt x}+{\tt x}^q +\cdots+{\tt x}^{q^{a-1}}$}, Discrete Math. {\bf 315} (2014), 173 -- 184.

\bibitem{Giesbrecht-JSC-1998}
M. Giesbrecht, {\it Factoring in skew-polynomial rings over finite fields}, J. Symbolic Comput. {\bf 26} (1998), 463 -- 486.

\bibitem{Hall-PAMS-1952}
M. Hall, {\it A combinatorial problem on abelian groups}, Proc. Amer. Math. Soc. {\bf 3} (1952), 584 -- 587.

\bibitem{Hou-JCTA-2011}
X. Hou, {\it Two classes of permutation polynomials over finite fields}, J. Combin. Theory Ser. A {\bf 118} (2011), 448 -- 454.

\bibitem{Hou-FFA-2012}
X. Hou, {\it A new approach to permutation polynomials over finite fields}, Finite Fields Appl. {\bf 18} (2012), 492 -- 521.

\bibitem{Hou13}
X. Hou, {\it Proof of a conjecture on permutation polynomials over finite fields}, Finite Fields Appl. {\bf 24} (2013) 192 -- 195.

\bibitem{Lidl-Niederreiter-97}
R. Lidl and H. Niederreiter, {\it Finite Fields}, 2nd ed.,
Cambridge Univ. Press, Cambridge,  1997.

\end{thebibliography}
\end{document}